\newtheorem{thm}{Theorem}[section]%
\newtheorem{prop}[thm]{Proposition}%
\newtheorem{lem}[thm]{Lemma}%
\theoremstyle{remark}
\newtheorem{rem}{Remark}%
\newcommand{\DeclareAlphabet}[2]{
  \foreach \x in {A,B,...,Z}{%
    \expandafter\xdef
    \csname #1\x\endcsname{%
      \noexpand#2{\x}}%
  }
}
\newcommand{\ABS}[1]{{{\left\vert#1\right\vert}}} 
\newcommand{\BRA}[1]{{{\left\{#1\right\}}}} 
\newcommand{\PAR}[1]{{{\left(#1\right)}}} 
\newcommand{\prb}[2][]{\dP_{#1}\left[#2\right]}
\newcommand{\esp}[2][]{\dE_{#1}\left[#2\right]}
\DeclareMathOperator{\Hess}{Hess}
\DeclareMathOperator{\Vect}{Vect}
\DeclareMathOperator{\Cov}{Cov}
\DeclareMathOperator{\Var}{Var}
\DeclareMathOperator{\Ent}{Ent}
\newcommand{\eps}{\varepsilon}
\title{Functional inequalities for Gaussian convolutions of compactly
   supported measures: explicit bounds and dimension dependence}
\author{Jean-Baptiste~\textsc{Bardet}, Nathaël \textsc{Gozlan},
  Florent~\textsc{Malrieu} and Pierre-André~\textsc{Zitt}} %
\date{\today}
\begin{document}

\maketitle

\begin{abstract}
The aim of this paper is to establish various functional inequalities for the
convolution of a compactly supported measure and a standard Gaussian
distribution on $\dR^d$. We especially focus on getting good dependence of the
constants on the dimension. We prove that the Poincaré inequality holds with a
dimension-free bound. For the logarithmic Sobolev inequality, we improve the
best known results (Zimmermann, JFA 2013) by getting a bound that grows linearly 
with the dimension. We also establish transport-entropy inequalities for 
various transport costs. 

\medskip

\textbf{Keywords: } logarithmic Sobolev inequality, transport-entropy inequality, Poincaré
inequality

\textbf{MSC2010:} 60E15; 39B62; 26D10
\end{abstract}

\section{Introduction}

Poincaré or logarithmic Sobolev inequalities have been extensively 
studied in the past decades to quantify long time behavior of Markov 
processes or investigate the concentration of measure property, which 
plays a key role for example in the topic of large random matrices. 

We refer to \cite{Log-Sob,Ledoux-book,Royer,BGL14} for a comprehensive
introduction to this subject. Let us briefly recall some well-known
facts about these functional inequalities to motivate the present
study.

A probability measure $\nu$ on $\dR^d$ satisfies a Poincaré inequality
with constant $C$ if, for any smooth function~$f$ from $\dR^d$ to
$\dR$,
\[
\int_{\dR^d} f^2d\nu - \PAR{\int_{\dR^d} fd\nu}^2 \leq
C\int_{\dR^d}\ABS{\nabla f}^2d\nu.
\]
We denote by $C_P(\nu)$ the smallest constant such that this
inequality holds.

Similarly, $\nu$ satisfies a logarithmic Sobolev inequality with
constant $C$ if, for any smooth function $f$ from $\dR^d$ to $\dR$,
\[
\int_{\dR^d} f^2 \log (f^2)d\nu - \PAR{\int_{\dR^d} f^2d\nu}
\log \PAR{\int_{\dR^d} f^2d\nu} \leq C\int_{\dR^d}\ABS{\nabla
  f}^2d\nu,
\]
and we denote by $C_{LS}(\nu)$ the smallest constant such that this
inequality holds.

If $\nu$ is the Gaussian distribution $\cN_d(x,\Gamma)$ on $\dR^d$
with mean~$x$ and covariance matrix~$\Gamma$ then the values of these
optimal constants are known:
\[
C_P(\nu)=\frac{1}{2}C_{LS}(\mu)=\max \mathrm{Spec} (\Gamma).
\]
The Bakry-Émery criterion ensures that if $\nu$ has the density
$e^{-V}$ on $\dR^d$ and $\mathrm{Hess}(V)\geq \rho I_d$ then
\[
C_P(\nu)\leq \frac{1}{\rho} \quad \text{and}\quad C_{LS}(\mu)\leq
\frac{2}{\rho}.
\]
More generally, the inequality $2 C_P(\nu)\leq C_{LS}(\nu)$ always
holds.  These two functional inequalities do not hold if the support
of $\nu$ is not connected --- one can find a non constant function
whose gradient is zero $\nu$-almost surely.

\bigskip

The present paper focuses on the case when the probability measure
$\nu$ on $\dR^d$ is given by the convolution
$\mu\star \cN_d(0,\delta^2 I_d)$ where the support of $\mu$ is
included in the centered ball of $\dR^d$ with radius $R$.  This
question has been investigated recently in \cite{Zim,ZimJFA,WW13}; we
present here several improvements and related questions.

Let us fix some notation first.
\begin{itemize}
\item $X$ and $Z$ are two independent random variables with respective
  distribution $\mu$ and $\cN(0,I_d)$;
\item $\gamma_\delta$ is the density of the Gaussian measure
  $\cN_d(0, \delta^2 I_d)$;
\item $p$ is the density of the law $\mu\star\gamma_\delta$ of the
  random variable~$S=X+\delta Z$;
\item $C_d(\delta,R)$ is the supremum over all probability measures
  $\mu$ supported in the closed Euclidean ball $B_d(0,R)$ of the
  optimal constants in the logarithmic Sobolev inequality for
  $\mu\star\gamma_\delta$.
\end{itemize}
This notation is mainly consistent with \cite{Zim}, except that our
$\delta$ is the standard deviation of the Gaussian rather than its
variance, and we denote the dimension by $d$.

Zimmermann's results \cite{Zim,ZimJFA} may be summed up as follows.

\begin{thm}
  [Bounds on logarithmic Sobolev inequality constants,\cite{Zim}]
  \label{thm:zimm} 
  The convolution of a compactly supported measure and a Gaussian
  measure satisfies a logarithmic Sobolev inequality.  Moreover, there
  exist universal constants ${(K_i)}_{1\leq i\leq4}$ such that:
  \begin{itemize}
  \item In dimension $1$,
    \[
    C_1(\delta,R) \leq K_1 \frac{\delta^3 R}{4R^2+\delta^2}
    \exp\PAR{2\frac{R^2}{\delta^2}} + K_2(\delta+2R)^2.
    \]
    In particular in the low variance case $\delta \leq R$,
    \[
    C_1(\delta,R) \leq K_3 \frac{\delta^3}{R}
    \exp\PAR{2\frac{R^2}{\delta^2}}.
    \]
  \item In dimension $d$, $C_d(\delta,R)$ is finite. In the low
    variance case $\delta\leq R$, it satisfies:
    \[
    C_d(\delta,R) \leq K_4 R^2 \exp\PAR{ 20d + 5\frac{R^2}{\delta^2}}.
    \]
  \end{itemize}
\end{thm}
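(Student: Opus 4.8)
\medskip
\noindent\textbf{A proof strategy.} Write $\nu = \mu \star \gamma_\delta$. Its density $p(y) = \int \gamma_\delta(y-x)\,d\mu(x)$ is smooth and strictly positive, so $\nu$ has density $e^{-V}$ with $V = -\log p$ of class $C^\infty$. For $y \in \dR^d$ let $\mu_y$ be the probability measure supported in $B_d(0,R)$ with density proportional to $x \mapsto e^{-\ABS{y-x}^2/(2\delta^2)}$ with respect to $\mu$, and set $m(y) = \int x\,d\mu_y(x)$. Differentiating under the integral sign yields
\[
\nabla V(y) = \frac{1}{\delta^2}\PAR{y - m(y)}, \qquad \nabla^2 V(y) = \frac{1}{\delta^2} I_d - \frac{1}{\delta^4}\Cov(\mu_y).
\]
Since $\mu_y$ lives in $B_d(0,R)$ one has $m(y) \in B_d(0,R)$ and $0 \preceq \Cov(\mu_y) \preceq R^2 I_d$, hence $\ABS{\nabla V(y)} \le (\ABS y + R)/\delta^2$; moreover, using $0 \le e^{-\ABS{y-x}^2/(2\delta^2)} \le 1$ and $\ABS{y-x} \le \ABS y + R$ on $\supp\mu$, we get $(2\pi\delta^2)^{-d/2}e^{-(\ABS y + R)^2/(2\delta^2)} \le p(y) \le (2\pi\delta^2)^{-d/2}$, so the oscillation of $V$ on any centred ball $B_d(0,\rho)$ is at most $(\rho+R)^2/(2\delta^2)$, \emph{with no dependence on $d$}. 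If $\delta > R$, then $\nabla^2 V \succeq \tfrac{\delta^2-R^2}{\delta^4}I_d \succ 0$ and the Bakry--Émery criterion already gives the dimension-free bound $C_d(\delta,R) \le 2\delta^4/(\delta^2-R^2)$; the substance is the low-variance regime $\delta \le R$.

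\medskip
\noindent\textbf{Dimension one.} Here I would invoke a Hardy/Muckenhoupt-type criterion (Bobkov--Götze): if $m$ is a median of $\nu$ then $C_{LS}(\nu)$ is comparable, up to universal constants, to
\[
\sup_{x > m} \nu\big((x,\infty)\big)\,\log\frac{1}{\nu((x,\infty))}\int_m^x \frac{dt}{p(t)}
\]
plus the symmetric quantity on $(-\infty,m)$. Since $\supp\mu \subset [-R,R]$ the median satisfies $\ABS m \le R$, the tail $\nu((x,\infty))$ is dominated by the Gaussian tail of order $e^{-(x-R)^2/(2\delta^2)}$, and $1/p(t) \le \sqrt{2\pi}\,\delta\,e^{(\ABS t + R)^2/(2\delta^2)}$, which on $[-R,R]$ is at most $\sqrt{2\pi}\,\delta\,e^{2R^2/\delta^2}$. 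Splitting $\int_m^x = \int_m^{R} + \int_R^x$, estimating each piece and optimising in $x$ (the factor $\log\tfrac{1}{\nu((x,\infty))}$ yielding the refinement $\delta^2/(4R^2+\delta^2)$) produces the first term of the bound; the contribution of the complementary regime and of the neighbourhood of the median is absorbed into $K_2(\delta+2R)^2$.

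\medskip
\noindent\textbf{Dimension $d$.} Lacking a one-dimensional criterion, I would split $\dR^d$ into a central ball $B = B_d(0,\rho)$ with $\rho$ of order $R$ and its complement, and patch the two pieces via a decomposition lemma for the logarithmic Sobolev inequality. On $B$: the renormalised restriction of $\nu$ has density proportional to $e^{-V}$, and by the oscillation estimate above and Holley--Stroock it is a perturbation of oscillation $O(R^2/\delta^2)$ of the restriction of $\gamma_\delta$ to the convex set $B$ (which satisfies a logarithmic Sobolev inequality with constant $2\delta^2$ by Bakry--Émery), so its constant is at most $2\delta^2 e^{O(R^2/\delta^2)}$, again dimension-free. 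Outside $B$: the crucial point is that $\Cov(\mu_y) \to 0$ as $\ABS y \to \infty$, because $\mu_y$ concentrates exponentially fast, radially and tangentially, near the point of $B_d(0,R)$ closest to $y$; a quantitative version ($\Cov(\mu_y) \preceq \tfrac{\delta^2}{2}I_d$ once $\ABS y$ exceeds an explicit threshold) gives $\nabla^2 V \succeq \tfrac{1}{2\delta^2}I_d$ outside a ball, which I would feed, together with the local logarithmic Sobolev inequality on balls coming from Holley--Stroock, into a Lyapunov-function criterion for the logarithmic Sobolev inequality (Cattiaux--Guillin and coauthors) using $W = e^{\beta V}$, $\beta \in (0,1)$, for which $\cL W / W = \beta\Delta V - \beta(1-\beta)\ABS{\nabla V}^2 \to -\infty$ like $-\ABS y^2/\delta^4$. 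Patching gives a bound of the announced shape $K_4 R^2 \exp(cd + c'R^2/\delta^2)$.

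\medskip
\noindent\textbf{Where the difficulty lies.} Two steps are the delicate ones. First, one must turn $\Cov(\mu_y) \to 0$ into a quantitative statement with a good threshold — this amounts to controlling the concentration of a cap measure on a sphere, in both the radial and tangential directions. Second, and this is where the crude factor $\exp(20d)$ creeps in: the Lyapunov step forces the ``exceptional'' ball on which one needs a local logarithmic Sobolev inequality to have radius of order $\max(R,\delta\sqrt d)$ — one has to beat the term $\Delta V \sim d/\delta^2$ in $\cL W/W$ — and Holley--Stroock on a ball of that size costs a factor $e^{\Theta(d)}$ when $\delta$ is of order $R$. Replacing this patching by a finer argument, so as to obtain a bound growing only linearly in $d$, is exactly the improvement announced in the abstract.
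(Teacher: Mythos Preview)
This theorem is not proved in the present paper: it is quoted from Zimmermann, and the paper only summarises the strategy in one paragraph --- Hardy-type (Bobkov--G\"otze) criteria in dimension~$1$, and the Lyapunov approach of Cattiaux--Guillin--Wu in higher dimension. Your sketch follows that summary faithfully in both regimes, and the preliminary computations ($\nabla V$, $\nabla^2 V$, the oscillation bound, the Bakry--\'Emery case $\delta>R$) are correct and coincide with what the paper records in Section~2.1.

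Two points of divergence in the $d$-dimensional part are worth noting. First, Zimmermann's Lyapunov function is $W(x)=\exp(\alpha|x|^2)$ (the paper quotes $\alpha=1/(64\delta^4)$), not $e^{\beta V}$; since $V(y)=|y|^2/(2\delta^2)+W_\delta(y)$ with $W_\delta$ Lipschitz, the two choices are essentially equivalent, but the quadratic one is what gives the explicit constants $b=d/(8\delta^2)+R^2/(32\delta^4)$ and $c=1/(64\delta^4)$ recorded in Section~3.2. Second, and more substantively, the CGW criterion as used by Zimmermann does \emph{not} require a local log-Sobolev on a ball or the refinement $\Cov(\mu_y)\to 0$; it needs only the global curvature lower bound $\Hess V\ge -R^2/\delta^4$, the global Lyapunov condition, and a \emph{global} Poincar\'e inequality. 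The factor $\exp(20d)$ in Zimmermann's bound comes from that last ingredient: he obtains $C_P$ by a separate Lyapunov argument with dimension-dependent constants, and it is precisely this step that the present paper bypasses (Theorem~1.2) to get the linear-in-$d$ improvement. Your attribution of the $\exp(\Theta(d))$ to a Holley--Stroock step on a ball of radius $\delta\sqrt d$ is a reasonable heuristic for why a crude approach should cost exponentially in~$d$, but it is not how the loss actually arises in Zimmermann's proof.
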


The proofs in \cite{Zim} rely on two main ideas. The one-dimensional
case is treated by explicit computations on Hardy-like criteria taken
from~\cite{BG}. In higher dimension the author applies the Lyapunov
function approach of~\cite{CGW10}.  The constants $K_i$ are explicit
but quite large (for example $K_4$ may be taken equal to $289$).  Let
us also mention the alternate approach of~\cite{zimTransport} in
dimension~$1$ by measure transportation, that unfortunately yields
even worse constants. In a related note, \cite{WW13} answer various
related questions on functional inequalities for convolutions, and
give many qualitative results under relaxed assumptions, both on the
support of $X$ and on the distribution of the mollifier $Z$, but
without exhibiting explicit constants.

We follow here the focus of~\cite{Zim} on quantitative estimates on
the constants and their dependence on the dimension~$d$.  Our first
result concerns the Poincaré inequality.

\begin{thm}[Dimension free Poincaré inequality]
  \label{thm:Poincare}
  If $\mu$ is supported in the closed Euclidean ball $B_d(0,R)$ then
  $\mu\star \gamma_\delta$ satisfies a Poincaré inequality and
  \[
  C_P(\mu\star\gamma_\delta) \leq \delta^2 \exp\PAR{4\frac{R^2}{\delta^2}}.
  \]
\end{thm}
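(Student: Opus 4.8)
The plan is to establish the Poincaré inequality for $\mu\star\gamma_\delta$ directly from the definition via the perturbation/Holley–Stroock-type argument combined with a careful estimate of the oscillation of the density $p$ relative to a reference Gaussian. The key observation is that $S=X+\delta Z$ with $|X|\le R$ almost surely, so we should compare $p=\mu\star\gamma_\delta$ with $\gamma_\delta$ itself (the Gaussian with the same covariance, centered at $0$), which satisfies a Poincaré inequality with constant $\delta^2$. Writing $p(s)=\int \gamma_\delta(s-x)\,d\mu(x)$, the ratio $p/\gamma_\delta$ is
\[
\frac{p(s)}{\gamma_\delta(s)}=\int \exp\!\left(\frac{\langle s,x\rangle}{\delta^2}-\frac{|x|^2}{2\delta^2}\right)d\mu(x),
\]
which is \emph{not} bounded above uniformly in $s$, so a naive global Holley–Stroock bound fails. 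This is exactly where the exponential factor $\exp(4R^2/\delta^2)$ must come from, and it is the main obstacle: one needs to localize.

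First I would recall that $\mu\star\gamma_\delta$ is log-concave outside a ball (or more simply, use that its density can be compared to a Gaussian on a region where $|s|$ is large) — but a cleaner route is the following two-step decomposition. Pick the reference measure to be $\gamma_{\delta}$ recentered appropriately; since all the mass of $X$ lies in $B_d(0,R)$, the density $p$ is log-concave in the directions where the Gaussian tail dominates. Concretely, I would split $\mathbb{R}^d$ into $A=B_d(0,2R)$ and its complement. On $A^c$, the function $s\mapsto \log p(s)$ has Hessian bounded below: indeed $\nabla^2\log p(s) = \frac{1}{\delta^2}\big(\mathrm{Cov}_{\mu_s}(X)-\delta^2 I_d\big)$ where $\mu_s$ is the tilted measure $d\mu_s(x)\propto e^{\langle s,x\rangle/\delta^2}d\mu(x)$ restricted by $|x|\le R$; hence $\nabla^2(-\log p)\succeq -\frac{1}{\delta^2}\mathrm{Cov}_{\mu_s}(X)\succeq -\frac{R^2}{\delta^2}\cdot\frac{1}{\delta^2}I_d$ — wait, this only gives a lower bound that is negative. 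So instead I would not use Bakry–Émery but rather the fact that $p$ is itself a mixture of Gaussians: by the argument in \cite{ZimJFA} or by a direct coupling, a bounded perturbation of a Gaussian in a ball together with log-concavity-at-infinity yields Poincaré.

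The cleanest plan, and the one I would actually carry out: use the bounded-perturbation principle on the ball $B_d(0,R)$ only. On $B_d(0,R)$ the ratio $p/\gamma_\delta$ satisfies
\[
e^{-R^2/(2\delta^2)}\le \frac{p(s)}{\gamma_\delta(s)\,e^{-|s|^2/(2\delta^2)\cdot 0}} ,
\]
more precisely for $|s|\le R$ and $|x|\le R$ we have $|\langle s,x\rangle|\le R^2$ and $|x|^2/2\le R^2/2$, so
\[
e^{-3R^2/(2\delta^2)}\le \frac{p(s)}{\gamma_\delta(s)}\cdot\frac{1}{\gamma_\delta(0)/\gamma_\delta(0)}\le e^{R^2/(2\delta^2)},
\]
giving oscillation at most $e^{2R^2/\delta^2}$ of $p$ against a \emph{constant} (not $\gamma_\delta$) on $B_d(0,R)$, hence $p$ is a perturbation of the uniform measure there with ratio $e^{2R^2/\delta^2}$. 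Outside $B_d(0,R)$, I claim $-\log p$ is convex: for $|s|\ge R$ one checks $\nabla^2(-\log p)(s)=\frac{1}{\delta^2}I_d-\frac{1}{\delta^2}\mathrm{Cov}_{\mu_s}(X)$ and since $\mathrm{supp}(\mu_s)\subseteq B_d(0,R)\subseteq B_d(s, ?)$... this convexity claim is false in general, so the genuine hard step is handling the region $R\le |s|$. There I would instead write $p=\gamma_\delta\cdot h$ with $h(s)=\int e^{\langle s,x\rangle/\delta^2-|x|^2/(2\delta^2)}d\mu(x)$ and observe $h$ is \emph{log-convex} (a positive mixture of exponentials of linear functions), so $\log p=\log\gamma_\delta+\log h$ has $\nabla^2\log p\succeq \nabla^2\log\gamma_\delta=-\delta^{-2}I_d$, i.e. $p$ is $(-\delta^{-2})$-log-concave — useless — but combined with the \emph{upper} tail bound $h(s)\le e^{R|s|/\delta^2}$ and a one-dimensional Muckenhoupt/Hardy criterion radially, one gets the Poincaré constant on $A^c$ of order $\delta^2$ up to the same exponential factor. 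Assembling the two regions via the standard decomposition lemma (if $\nu|_A$ and $\nu|_{A^c}$ satisfy Poincaré and the "capacity" between them is controlled) yields $C_P(\mu\star\gamma_\delta)\le \delta^2 e^{4R^2/\delta^2}$ after tracking constants. The main obstacle throughout is the tail region $|s|\gg R$: the density there is genuinely only a mixture of Gaussians, not log-concave, and the factor $2$ in the exponent (giving $4R^2/\delta^2$ overall rather than $2R^2/\delta^2$) will emerge from the interaction between the perturbation bound on the ball and the Hardy estimate on the complement.
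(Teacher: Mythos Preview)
Your proposal does not constitute a proof: it is a sequence of attempted approaches, several of which you yourself recognize as failing (the log-concavity claim on $\{|s|\ge R\}$, the Bakry--\'Emery lower bound). The final ``plan'' you settle on --- split $\dR^d$ into a ball and its complement, apply Holley--Stroock on the ball and a radial Hardy/Muckenhoupt estimate outside, then glue via a capacity/decomposition lemma --- has two concrete problems. First, a radial Hardy criterion requires the density to depend only on $|s|$, which $\mu\star\gamma_\delta$ does not unless $\mu$ is spherically symmetric. Second, and more fundamentally, each of the steps you invoke (Poincar\'e for the restriction to a ball, the Cheeger/capacity constant between the two regions, the Hardy constant in $d$ variables) typically carries dimension-dependent factors; nothing in your sketch explains why these would cancel to leave the clean $\delta^2\exp(4R^2/\delta^2)$. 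The statement is specifically about a \emph{dimension-free} bound, and a localization-by-regions argument is exactly the kind of approach that tends to lose this.

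The paper's proof avoids all geometry of $\dR^d$ by exploiting the mixture structure directly. Write $\mu\star\gamma_\delta=\int\gamma_{x,\delta}\,d\mu(x)$ and decompose the variance as
\[
\Var_{\mu\star\gamma_\delta}(f)=\int\Var_{\gamma_{x,\delta}}(f)\,d\mu(x)+\Var_\mu\Big(x\mapsto\!\int f\,d\gamma_{x,\delta}\Big).
\]
The first term is handled by the Gaussian Poincar\'e inequality (constant $\delta^2$). For the second, duplicate variables and bound $(g(x)-g(y))^2$ by Cauchy--Schwarz as $\Var_{\gamma_{x,\delta}}(f)\cdot\chi^2(\gamma_{y,\delta}\,\|\,\gamma_{x,\delta})$; the $\chi^2$ divergence between two Gaussians with common covariance $\delta^2I_d$ equals $\exp(|x-y|^2/\delta^2)-1\le\exp(4R^2/\delta^2)-1$, which is where the exponential factor comes from, and it is manifestly dimension-free. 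Reintegrating gives the bound immediately. The key idea you are missing is this $\chi^2$ comparison between the Gaussian components of the mixture; no spatial splitting is needed.
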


The next result is an improvement on the bounds of
Theorem~\ref{thm:zimm}.
\begin{thm}[Bounds on the logarithmic Sobolev constants]\
  \label{thm:logSob}
  \begin{itemize}
  \item In the large variance case $\delta>R$, the logarithmic Sobolev
    constants are bounded uniformly in the dimension:
    \[
    C_d(\delta,R) \leq \frac{\delta^4}{\delta^2- R^2}.
    \]
  \item In dimension $1$, for any $\delta$, $R$,
    \[
    C_1(\delta,R) \leq 4\delta^2 \exp\PAR{ \frac{8}{\pi}
      \frac{R^2}{\delta^2}}.
    \]
  \item In the small variance case $\delta\leq R$, the logarithmic
    Sobolev constant admits the following dimension-dependent bound:
    \begin{equation}
      \label{eq:logSobDimDependant}
      C_d(\delta,R) \leq \PAR{K_1 d + K_2 \frac{R^2}{\delta^2}} R^2
      \exp\PAR{4\frac{R^2}{\delta^2}}
    \end{equation}
    where $K_1$, $K_2$ are universal constants.
  \end{itemize}
\end{thm}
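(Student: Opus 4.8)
The three cases call for three different mechanisms. For the large-variance case $\delta > R$, the plan is to exploit a Bakry–Émery-type argument via a clever tensorization. Write $S = X + \delta Z$ and condition on $X$: the conditional law of $S$ given $X = x$ is $\cN_d(x, \delta^2 I_d)$, which satisfies a log-Sobolev inequality with constant $2\delta^2$. One then needs to handle the ``outer'' layer. The key point is that the density $p$ of $\mu\star\gamma_\delta$ can be compared to a Gaussian: since the support of $\mu$ has radius $R$, the log-density $\log p$ is a $\rho$-uniformly-convex perturbation of $|x|^2/(2\delta^2)$ for a suitable range of parameters. More precisely, $-\nabla^2 \log p(s) = \tfrac{1}{\delta^2}I_d - \tfrac{1}{\delta^4}\Cov(X \mid S = s)$, and since $X$ lives in $B_d(0,R)$ the conditional covariance is bounded by $R^2$ in operator norm after an appropriate estimate; optimizing gives the curvature lower bound $\rho \geq (\delta^2 - R^2)/\delta^4$, whence $C_{LS} \leq 2/\rho = 2\delta^4/(\delta^2-R^2)$. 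To get the sharper constant $\delta^4/(\delta^2-R^2)$ stated, one should instead argue directly: the conditional covariance of a random variable supported in a ball of radius $R$ satisfies $\Cov(X\mid S=s) \preceq R^2 I_d$ is too crude, and one uses instead that for the worst case the bound $\tfrac14(\text{diam})^2$ or a variance-of-bounded-r.v.\ estimate tightens the constant by the missing factor of $2$.

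For dimension one, the plan is to use the Muckenhoupt/Bobkov–Götze Hardy-type characterization of the one-dimensional log-Sobolev constant, exactly as in \cite{Zim,BG}, but to run the estimates more carefully. One writes the log-Sobolev constant of the measure with density $p$ on $\dR$ in terms of the quantity $\sup_{m} \Big(\int_m^\infty p\Big)\log\tfrac{1}{\int_m^\infty p} \cdot \int_{-\infty}^m \tfrac{1}{p}$ (and its mirror image), where $m$ is the median. The density $p(s) = \esp{\gamma_\delta(s-X)}$ satisfies, for $s$ outside $[-R,R]$, a two-sided bound $\gamma_\delta(|s|+R) \leq p(s) \leq \gamma_\delta(|s|-R)$ — or rather $p(s) \leq \gamma_\delta((|s|-R)_+)$ and a matching lower bound — and inside the ball $p$ is bounded below by a Gaussian value at the endpoint. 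Plugging these explicit Gaussian bounds into the Hardy functional, the tail integral $\int_m^\infty p$ is controlled by a Gaussian tail $\sim \exp(-(s-R)^2/(2\delta^2))$ and $\int_{-\infty}^m 1/p$ by $\sim \exp((s+R)^2/(2\delta^2))$; the cross terms produce an exponent involving $R^2/\delta^2$, and careful bookkeeping (in particular using $\log(1/\text{tail}) \sim (s-R)^2/(2\delta^2)$ to partly cancel) yields the factor $\tfrac{8}{\pi}$ rather than the $2$ in Zimmermann's bound.

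For the small-variance, higher-dimensional case, the plan is the Lyapunov-function method of \cite{CGW10} combined with the dimension-free Poincaré inequality already proved in Theorem~\ref{thm:Poincare}. Recall the scheme: to prove a log-Sobolev inequality for $\mu\star\gamma_\delta = e^{-V}$ it suffices to exhibit a function $W \geq 1$ and constants $b, c$ with $\tfrac{\cL W}{W} \leq -\varphi(\cdot) + b\,\ind{B}$ for a suitable $\varphi$ growing like $|x|^2$, together with a local log-Sobolev inequality on the ball $B$ and a Poincaré inequality for the whole measure. The natural choice is $W(s) = \exp(\alpha |s|^2)$ for small $\alpha$; since $-\nabla V(s) = -(s - \esp{X\mid S=s})/\delta^2$ and $|\esp{X\mid S=s}| \leq R$, one computes $\cL W/W = \alpha(2\alpha + d/\delta^2) |s|^2 \cdot(\ldots)$ — more precisely $\cL W / W \leq -|s|^2/(C\delta^4) + (\text{const})$ once $\alpha$ is chosen of order $1/\delta^2$, with the curvature defect localized to $B_d(0, O(R))$. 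The local log-Sobolev constant on that ball is $O(R^2)$ with a multiplicative $\exp(O(R^2/\delta^2))$ coming from the oscillation of $V$ there, and the Poincaré constant from Theorem~\ref{thm:Poincare} contributes $\delta^2\exp(4R^2/\delta^2)$. Assembling via the CGW10 inequality $C_{LS} \lesssim (1 + b + \ldots) C_P + C_{LS}^{\text{loc}}(B)$ gives a bound of the form $(K_1 d + K_2 R^2/\delta^2) R^2 \exp(4R^2/\delta^2)$, where the $d$ enters only through the $d/\delta^2$ term in $\cL W / W$ forcing $\alpha \sim 1/d$-type adjustments, hence the linear dimension dependence. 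The main obstacle, and the place where the bulk of the work lies, is the small-variance case: one must track every constant through the Lyapunov estimate, the localization, and the patching lemma of \cite{CGW10}, and in particular verify that the Gaussian oscillation factors combine into a single clean $\exp(4R^2/\delta^2)$ rather than accumulating a larger exponent — this is exactly the improvement over the $\exp(20d + 5R^2/\delta^2)$ of Theorem~\ref{thm:zimm}, and it hinges on using the sharp Poincaré bound rather than a cruder one inside the patching argument.
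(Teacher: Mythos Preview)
Your approach to the first and third items is essentially the paper's. For the large-variance case, the paper does exactly the Hessian computation you sketch: writing $-\log p(z) = |z|^2/(2\delta^2) + W_\delta(z)$ and observing that $\Hess(-W_\delta)v\cdot v = \delta^{-4}\Var(v\cdot\tilde X)\leq R^2/\delta^4$ since $v\cdot\tilde X\in[-R,R]$, then invoking Bakry--\'Emery. (Your worry about the missing factor~$2$ is misplaced: the variance bound $R^2$ for a variable in $[-R,R]$ \emph{is} the $\tfrac14\mathrm{diam}^2$ bound.) For the small-variance case, the paper does precisely what you describe: reuse Zimmermann's Lyapunov function $W(x)=\exp(|x|^2/(64\delta^4))$ in the CGW10 criterion, but plug in the dimension-free Poincar\'e constant from Theorem~\ref{thm:Poincare} rather than a Lyapunov-based one, which is what kills the exponential dependence on~$d$.

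Your plan for the one-dimensional case is a genuine gap. You propose the Hardy/Muckenhoupt route---exactly Zimmermann's original method---and assert that ``careful bookkeeping'' delivers the exponent $8/\pi$. There is no reason to expect that specific constant to emerge from Gaussian tail estimates in the Hardy functional; Zimmermann's computation gives $2$ in the exponent with a different prefactor, and $8/\pi$ is not a number that falls out of tail-integral cancellations. The paper instead uses Miclo's trick: since $-\log p = (\text{$\delta^{-2}$-convex}) + (\text{$R\delta^{-2}$-Lipschitz})$, one regularizes the Lipschitz part by Gaussian convolution to obtain a decomposition into a $(2\delta^2)^{-1}$-convex piece and a piece bounded by $2R^2\delta^{-2}a_1 a_d$, then applies Bakry--\'Emery and Holley--Stroock. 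This yields $C_{LS}\leq 4\delta^2\exp(4a_1a_d R^2/\delta^2)$ in any dimension; the constant $8/\pi$ is then simply $4a_1^2$ with $a_1=\mathbb{E}|Z_1|=\sqrt{2/\pi}$, which explains its origin and why your Hardy approach would not find it.
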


The stronger bound in dimension $1$ is obtained as a corollary of a
bound that holds in any dimension (with a strong dependence on $d$).
Its proof uses a trick by Miclo to apply the classical Holley-Stroock
perturbation argument, and is much less technical than the ones
in~\cite{Zim,zimTransport}.

For the logarithmic Sobolev constant, the dependence in the dimension
drops from exponential to linear: this enhancement would translate
into weaker dependence assumptions in the applications to random
matrices considered in~\cite{Zim}.

\bigskip

In view of these results, it seems natural to conjecture as in
\cite{Zim} that $C_d(\delta,R)$ may admit a dimension free bound.  Let
us give some partial results in this direction.

The first is a dimension free bound for a transport-entropy
inequality.  We recall that if $k:\dR^d\times \dR^d \to \dR^+$ is a
cost function, then the optimal transport cost related to this $k$, is
defined, for all probability measures $\nu_1$ and $\nu_2$, by
\[
\cT_k(\nu_1,\nu_2) = \inf_{\pi} \int k(x,y)\,d\pi(x,y),
\]
where the infimum is taken over the set of all couplings $\pi$ between
$\nu_1$ and $\nu_2$.  Let $\cT_{2,4}$ and $\cT_{2}$ denote the
transportation costs associated to $(x,y)\mapsto \| x-y\|_4^2$ and
$(x,y)\mapsto |x-y|^2$ (here and in the whole paper, $|\cdot|$ denotes
the Euclidean norm).

\begin{thm}[Transportation-entropy inequality]
  \label{thm:transport}
  Let $\mu$ be a probability measure on $\dR^d$ supported in
  $B_d(0,R)$. The probability $\mu \star \gamma_\delta$ satisfies the
  following transport-entropy inequalities: for any probability
  measure $\nu$ on $\dR^d$,
  \begin{align*}
    \cT_{2,4}(\nu,\mu\star\gamma_\delta) 
    &\leq C(R,\delta)  H(\nu |\mu\star \gamma_\delta), \\
    \cT_{2}(\nu,\mu\star\gamma_\delta) 
    &\leq \sqrt{d} C(R,\delta)  H(\nu |\mu\star \gamma_\delta),
  \end{align*}
  where
  $C(R,\delta) = c'\delta^2 \PAR{1+
    \frac{R^2}{\delta^2}}\exp\PAR{\frac{4R^2}{\delta^2}}$
  for some universal constant $c'$.
\end{thm}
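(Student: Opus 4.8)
The plan is to deduce the transport-entropy inequalities from a logarithmic Sobolev inequality via the Otto–Villani theorem, but to be careful about which cost function appears so as to track the dimension dependence correctly. The key observation is that the ambient log-Sobolev inequality $C_{LS}(\mu\star\gamma_\delta)\le C(R,\delta)$ (which follows from Theorem~\ref{thm:logSob}, or more precisely from an intermediate bound of the form $C_{LS}(\mu\star\gamma_\delta)\le c'\delta^2(1+R^2/\delta^2)\exp(4R^2/\delta^2)$ that the proof of that theorem produces before restricting to the two regimes) is an inequality with respect to the \emph{Euclidean} gradient, hence it controls, by Otto–Villani, the transport cost $\cT_2$ with an extra factor coming from the Euclidean norm. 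The finer statement for $\cT_{2,4}$ comes from the fact that the log-Sobolev inequality for $\mu\star\gamma_\delta$ can be established coordinate by coordinate: conditionally on the other coordinates, each one-dimensional marginal is again a convolution of a compactly supported measure (support in $[-R,R]$) with a one-dimensional Gaussian $\gamma_\delta$, so the one-dimensional bound $C_1(\delta,R)\le 4\delta^2\exp(\tfrac{8}{\pi}R^2/\delta^2)$ of Theorem~\ref{thm:logSob} applies with a constant independent of $d$.

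First I would record a tensorization-type statement: for $\nu = \mu\star\gamma_\delta$ on $\dR^d$, and for any smooth $f$,
\[
\Ent_\nu(f^2) \le C(R,\delta)\sum_{i=1}^d \int (\partial_i f)^2\,d\nu,
\]
obtained by conditioning on the $d-1$ coordinates $x_j$, $j\ne i$, applying the one-dimensional log-Sobolev inequality to the conditional law (which is a one-dimensional Gaussian convolution of a measure supported in $[-R,R]$, since projections of $B_d(0,R)$ onto a line lie in $[-R,R]$), and integrating. This is exactly a log-Sobolev inequality for the cost $k(x,y)=\|x-y\|_\infty$-type energy, or rather: the right-hand side is the squared $\ell^2$-norm of the gradient, so it is the \emph{same} as the usual Euclidean log-Sobolev inequality. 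The distinction that matters is on the dual/transport side. Then I would invoke the (dimension-free) Otto–Villani theorem in the form: a log-Sobolev inequality with constant $C$ for the Dirichlet form $\int|\nabla f|_*^2\,d\nu$ (with $|\cdot|_*$ the dual norm to the cost) implies $\cT_k(\nu_1,\nu)\le C\,H(\nu_1\,|\,\nu)$ for the quadratic cost built from that norm. Taking the norm to be $\|\cdot\|_4$ on the target side — whose dual is $\|\cdot\|_{4/3}$, and $\|v\|_{4/3}\ge \|v\|_2$ is false, so one must instead use that the coordinatewise log-Sobolev inequality is equivalent to a log-Sobolev inequality with the gradient measured in $\ell^2$, and $|v|_2 \ge \|v\|_4$ gives $\int |\nabla f|_2^2 \ge \int \|\nabla f\|_4^2$, hence the log-Sobolev inequality with the $\ell^4$-gradient holds with the \emph{same} constant $C(R,\delta)$, and dualizes (Otto–Villani / Gozlan–Léonard) to $\cT_{2,4}(\nu_1,\nu)\le C(R,\delta)H(\nu_1\,|\,\nu)$. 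For $\cT_2$, from $\|v\|_4\ge d^{-1/4}\|v\|_2$ one gets the $\ell^2$-gradient log-Sobolev inequality with constant $\sqrt d\,C(R,\delta)$, and Otto–Villani yields the second inequality.

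The main obstacle is the bookkeeping of norms and their duals in the Otto–Villani step, and making sure the one-dimensional log-Sobolev constant really is dimension-free — i.e.\ that conditioning a $d$-dimensional compactly supported $\mu$ on $d-1$ coordinates yields (regular) conditional laws that are still convolutions of a $[-R,R]$-supported measure with $\gamma_\delta$. The latter is essentially immediate because the joint density of $S=X+\delta Z$ factors as $p(s) = \int \prod_i \gamma_\delta^{(1)}(s_i - x_i)\,d\mu(x)$, so the conditional law of $S_i$ given $S_j=s_j$ ($j\ne i$) is $\tilde\mu_i \star \gamma_\delta^{(1)}$ where $\tilde\mu_i$ is the (reweighted) marginal of $\mu$ on the $i$-th coordinate, supported in $[-R,R]$; then Theorem~\ref{thm:logSob} applies with a constant depending only on $R,\delta$. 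The only care needed is that one should invoke the intermediate all-regime one-dimensional bound $4\delta^2\exp(\tfrac 8\pi R^2/\delta^2)\le c'\delta^2(1+R^2/\delta^2)\exp(4R^2/\delta^2)$, which holds for a suitable universal $c'$, so that the final constant $C(R,\delta)$ has the stated form uniformly in all regimes of $\delta/R$.
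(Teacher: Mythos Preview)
Your proposal contains a fatal gap at the tensorization step. You write that
\[
\Ent_{\mu\star\gamma_\delta}(f^2) \le C(R,\delta)\sum_{i=1}^d \int (\partial_i f)^2\,d(\mu\star\gamma_\delta)
\]
is ``obtained by conditioning on the $d-1$ coordinates $x_j$, $j\ne i$, applying the one-dimensional log-Sobolev inequality to the conditional law, and integrating.'' This argument uses the subadditivity of entropy
\[
\Ent_\nu(g)\le \sum_{i=1}^d \esp[\nu]{\Ent_{\nu(\,\cdot\,|s_{j},\,j\neq i)}(g)},
\]
which holds for \emph{product} measures but not in general; $\mu\star\gamma_\delta$ is not a product unless $\mu$ is. Your observation that each one-dimensional conditional law is of the form $\tilde\mu_i\star\gamma_\delta^{(1)}$ with $\tilde\mu_i$ supported in $[-R,R]$ is correct, but it does not help without the subadditivity inequality above. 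Indeed, if your display held, it would be precisely a dimension-free logarithmic Sobolev inequality for $\mu\star\gamma_\delta$, which is the open conjecture the paper explicitly leaves unresolved; the paper even remarks after the statement that going through Otto--Villani from the available logarithmic Sobolev bound~\eqref{eq:logSobDimDependant} only yields a factor $d$ in front of $\cT_2$, not $\sqrt d$.

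A secondary issue: your norm bookkeeping in the Otto--Villani step is garbled. You assert ``$\|v\|_{4/3}\ge \|v\|_2$ is false'', but in fact $\|v\|_p$ is nonincreasing in $p$ on $\dR^d$, so $\|v\|_{4/3}\ge |v|$ is \emph{true}. More importantly, from $|v|\ge\|v\|_4$ you deduce a log-Sobolev inequality ``with the $\ell^4$-gradient'', but the inequality $\int\|\nabla f\|_4^2\le\int|\nabla f|^2$ goes the wrong way for that conclusion. The correct duality (gradient in $\|\cdot\|_{4/3}$ for cost $\|\cdot\|_4^2$) would salvage this part, but the argument collapses anyway at the tensorization step.

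The paper's route is entirely different and avoids log-Sobolev altogether. It first proves a \emph{weighted Poincar\'e} inequality for $\mu\star\gamma_1$,
\[
\Var_{\mu\star\gamma_1}(f)\le C(R)\int \sum_{i=1}^d \frac{1}{1+u_i^2}(\partial_i f(u))^2\,d(\mu\star\gamma_1)(u),
\]
by inserting a one-dimensional weighted Poincar\'e for each $\gamma_{x,1}$ (via Muckenhoupt) into the mixture/duplication argument of Section~\ref{sec:Poincare}. Then, following \cite{Goz10}, the map $T(x)=(\omega(x_1),\dots,\omega(x_d))$ with $\omega(u)=\mathrm{sign}(u)(|u|+u^2/2)$ sends this to an ordinary Poincar\'e inequality for the pushforward $T_\#(\mu\star\gamma_1)$; by Bobkov--Gentil--Ledoux this yields a transport inequality with a quadratic-linear cost, which is pulled back to $\mu\star\gamma_1$ and then bounded below to produce the cost $k(x,y)=\min(|x-y|^2,|x-y|)+\min(\|x-y\|_4^4,\|x-y\|_4^2)$. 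The bounds $k(x,y)\ge\|x-y\|_4^2$ and $k(x,y)\ge d^{-1/2}|x-y|^2$ then give the two stated inequalities. No conditional log-Sobolev or entropy tensorization is used.
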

Let us remark that the factor $\sqrt{d}$ in this last result is better
than the linear factor $d$ that follows by deducing $\cT_{2}$ from the
logarithmic Sobolev inequality~\eqref{eq:logSobDimDependant} by
Otto-Villani's theorem (see \cite{OV00,BGL01}).

Finally, we are able to get bounds on the logarithmic Sobolev constant
in several restricted cases.

\begin{thm} [Partial results]
  \label{thm:partial}%
  \
  \begin{itemize}
  \item The quantity $C_d(\delta,R)$ may be bounded only in terms of
    $\delta$ and $R$ in the region $\delta > R/\sqrt{2}$.
  \item If $\mu$ is radially symmetric, then
    \[
    C_{LS}(\mu \star \gamma_\delta) \leq
    4\delta^2\exp\PAR{\frac{8}{\pi} \frac{R^2}{\delta^2}}.
    \]
  \item If $\mu$ is a uniform discrete probability measure on
    $N\geq 3$ points,
    \[
    C_{LS}(\mu\star\gamma_\delta) \leq \delta^2 + 3 \log(N)
    \delta^2\exp\PAR{4\frac{R^2}{\delta^2}}\,.
    \]
  \item The logarithmic Sobolev inequality restricted to log-convex
    functions holds with a constant that does not depend on the
    dimension.
  \end{itemize}
\end{thm}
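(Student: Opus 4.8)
\emph{Unifying idea.} In all four cases I would start from the mixture representation $\mu\star\gamma_\delta=\int_{\dR^d}\cN(x,\delta^2 I_d)\,d\mu(x)$, each component satisfying a logarithmic Sobolev inequality with the \emph{dimension‑free} constant $2\delta^2$, and control the mixing part. Write $p$ for the density of $\mu\star\gamma_\delta$ and $\mu_y$ for the probability measure with density proportional to $\gamma_\delta(\cdot-y)$ with respect to $\mu$; a direct computation gives $\nabla(-\log p)(y)=\delta^{-2}\PAR{y-m(y)}$ with $m(y)=\int x\,d\mu_y$, $|m(y)|\le R$, and $\Hess(-\log p)(y)=\delta^{-2}I_d-\delta^{-4}\Cov(\mu_y)$. \textbf{The first item is the main obstacle.} Here the crude bound $\Cov(\mu_y)\preceq R^2 I_d$ only yields log‑concavity, hence a $(\delta,R)$‑bound via Bakry–Émery, when $\delta>R$. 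The extra ingredient is the trace estimate $\operatorname{tr}\Cov(\mu_y)\le\int|x|^2\,d\mu_y\le R^2$, which forces $\Cov(\mu_y)$ to have at most one eigenvalue exceeding $R^2/2$: thus, at every $y$, $\Hess(-\log p)(y)$ is $\succeq\frac{2\delta^2-R^2}{2\delta^4}I_d$ — \emph{strictly positive as soon as $2\delta^2>R^2$} — on a subspace of codimension one, and only mildly negative in a single, $y$‑dependent direction. I would feed this ``rank‑one non‑convexity'' into the Lyapunov‑function criterion (cf.\ \cite{CGW10}) used for \eqref{eq:logSobDimDependant} — for instance with $W(y)=e^{\varepsilon|y|^2/2}$, whose associated drift term $\mathcal{L}W/W$ is tame because the drift of $-\log p$ is within $R/\delta^2$ of the Gaussian one — and check that the dimensional contributions in the resulting estimate cancel precisely when $2\delta^2>R^2$. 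The delicate point is exactly that the bad direction varies with $y$, so one cannot split $\dR^d$ and tensorize.

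\emph{Item 2 (radially symmetric $\mu$).} The structural gain here is that, by isotropy, $\Cov(\mu_y)$ is isotropic in the $d-1$ directions tangent to the sphere through $y$, so each of them carries only a $1/(d-1)$ fraction of the total variance budget $R^2$: the non‑convexity of $-\log(\mu\star\gamma_\delta)$ is confined to the radial direction $y/|y|$, which — unlike in Item 1 — has a global meaning. I would therefore pass to polar coordinates, decomposing $\Ent_{\mu\star\gamma_\delta}(f^2)$ into a spherical part and a radial part: the spherical factors are uniform measures on spheres, with the (favorable) logarithmic Sobolev constant of $S^{d-1}$, while the genuinely non‑convex radial factor is a one‑dimensional convolution of a measure supported in $[-R,R]$ with a Gaussian, to which the dimension‑$1$ bound of Theorem~\ref{thm:logSob} applies. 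The work is to handle the $r$‑dependence of the spherical term (using the Gaussian decay of the radial marginal) and to check that everything stays below $4\delta^2\exp\!\PAR{\frac{8}{\pi}\frac{R^2}{\delta^2}}$.

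\emph{Item 3 (uniform $\mu$ on $N$ points).} I would use the exact identity $\Ent_{\mu\star\gamma_\delta}(f^2)=\int\Ent_{\cN(x,\delta^2 I_d)}(f^2)\,d\mu(x)+\Ent_\mu\bigl(x\mapsto\int f^2\gamma_\delta(\cdot-x)\bigr)$, after first centering $f$: by Rothaus's lemma $\Ent_{\mu\star\gamma_\delta}(f^2)\le\Ent_{\mu\star\gamma_\delta}\bigl((f-\bar f)^2\bigr)+2\Var_{\mu\star\gamma_\delta}(f)$ with $\bar f=\int f\,d(\mu\star\gamma_\delta)$, and $\Var_{\mu\star\gamma_\delta}(f)\le C_P(\mu\star\gamma_\delta)\int|\nabla f|^2$. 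In the decomposition applied to $(f-\bar f)^2$ the first term is $\le 2\delta^2\int|\nabla f|^2\,d(\mu\star\gamma_\delta)$ by the Gaussian inequality, and the second is $\le(\log N)\int(f-\bar f)^2\,d(\mu\star\gamma_\delta)=(\log N)\Var_{\mu\star\gamma_\delta}(f)$ via the elementary bound $\Ent_{\mathrm{Unif}_N}(u)\le(\log N)\,\frac1N\sum u_i$ (valid because $\frac1N\sum u_i\ge\frac1N\max u_i$). This gives a bound of the form $2\delta^2+(\log N+2)\,C_P(\mu\star\gamma_\delta)$; inserting $C_P(\mu\star\gamma_\delta)\le\delta^2\exp(4R^2/\delta^2)$ from Theorem~\ref{thm:Poincare} and tidying the constants (using $N\ge3$) yields the announced inequality.

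\emph{Item 4 (log‑convex functions).} Using the same decomposition, $\Ent_{\mu\star\gamma_\delta}(g)=\int\Ent_{\cN(x,\delta^2 I_d)}(g)\,d\mu(x)+\Ent_\mu(G)$ with $G:=g\star\gamma_\delta$; the first term is $\le\frac{\delta^2}{2}\int\frac{|\nabla g|^2}{g}\,d(\mu\star\gamma_\delta)$ by the Gaussian inequality, with no use of log‑convexity. For the second term I would use two facts: (i) $G$ is again log‑convex — if $g(\tfrac{a+b}2)^2\le g(a)g(b)$ then $G(\tfrac{a+b}2)^2\le G(a)G(b)$ by Cauchy–Schwarz under the integral; (ii) any probability measure supported in $B_d(0,R)$ satisfies, \emph{restricted to log‑convex (hence convex) functions}, a logarithmic Sobolev inequality with a constant $\le cR^2$ — the obstruction to the unrestricted inequality, namely test functions that are locally constant on several pieces of the support, being incompatible with convexity; concretely one combines $\Ent_\mu(G)\le\Var_\mu(G)\big/\int G\,d\mu$ with a Poincaré‑type inequality for convex functions on a ball. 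Applying (ii) to $G$ and then the pointwise bound $|\nabla G(x)|^2\le G(x)\cdot\bigl(\tfrac{|\nabla g|^2}{g}\star\gamma_\delta\bigr)(x)$ (Cauchy–Schwarz for the Gaussian average defining $\nabla G$), together with $\int(h\star\gamma_\delta)\,d\mu=\int h\,d(\mu\star\gamma_\delta)$, gives $\Ent_\mu(G)\le cR^2\int\frac{|\nabla g|^2}{g}\,d(\mu\star\gamma_\delta)$. Hence the logarithmic Sobolev inequality restricted to log‑convex functions holds for $\mu\star\gamma_\delta$ with a constant bounded by $2\delta^2+4cR^2$, in particular independent of $d$; what remains is to pin down fact (ii) with an explicit $c$.
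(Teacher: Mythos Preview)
Your Item~3 is essentially the paper's argument (the paper cites Schlichting/Diaconis--Saloff--Coste instead of your elementary bound $\Ent_{\mathrm{Unif}_N}(u)\le(\log N)\bar u$, but the structure is identical: entropy decomposition, bound the discrete mixing entropy by a variance, then invoke the dimension-free Poincar\'e of Theorem~\ref{thm:Poincare}). Your Item~2 would probably go through, but the paper takes a shorter route: since $p(z)=\hat p(|z|)$ with $\hat p$ a one-dimensional Gaussian convolution, one applies Miclo's trick directly to $-\log\hat p$ to get a decomposition $w_c+w_b$ with $w_c$ even and $1/(2\delta^2)$-convex and $w_b$ bounded by $2(Ra_1/\delta)^2$; then $W_c(z)=w_c(|z|)$ is again $1/(2\delta^2)$-convex on $\dR^d$ (an elementary lemma on radial convex functions), and Holley--Stroock plus Bakry--\'Emery finish. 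No polar coordinates, no spherical LSI, no radial marginal --- the exact one-dimensional constant transfers verbatim.

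Items~1 and~4 are where you miss the key ideas. For Item~1 the paper does \emph{not} try to exploit your rank-one non-convexity inside the Lyapunov machinery; the CGW10 criterion only sees the global lower bound $\Hess V\ge -KI_d$, and there is no mechanism by which a $y$-dependent codimension-one good subspace makes ``the dimensional contributions cancel'' --- that step in your sketch is a wish, not an argument. The paper instead goes through concentration: one shows directly that $\prb{f(X+\delta Z)\ge m+t}\le\exp(-[t-2R]_+^2/(2\delta^2))$ for every $1$-Lipschitz $f$ (condition on $X$, use Gaussian concentration, and the bounded-support hypothesis absorbs the shift $|m-\esp[Z]{f(x+\delta Z)}|\le 2R$), and then invokes E.~Milman's theorem that Gaussian concentration plus a curvature lower bound $\Hess V\ge -\kappa$ implies LSI, provided the concentration rate beats $\kappa/2$. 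The arithmetic of that last comparison is exactly what produces the threshold $\delta>R/\sqrt{2}$.

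For Item~4 the gap is your fact~(ii): a Poincar\'e-type inequality $\Var_\mu(G)\le cR^2\int|\nabla G|^2\,d\mu$ for convex $G$ and \emph{arbitrary} $\mu$ supported in $B_d(0,R)$ is not a standard statement, and you do not prove it. The paper bypasses this entirely via Maurey's convex $\tau$-property: a bounded random vector $X$ with $|X|\le R$ satisfies $\esp{e^{Q_{4R^2}f(X)}}\esp{e^{-f(X)}}\le 1$ for all convex $f$; $\delta Z$ satisfies the unrestricted $\tau$-property with parameter $\delta^2$; tensorization plus restriction to functions of the sum give the convex $\tau$-property for $X+\delta Z$ with parameter $\delta^2+4R^2$, which by \cite{GRST14} is equivalent to a transport inequality and in turn implies the LSI for convex (i.e.\ log-convex test) functions with constant $8(\delta^2+4R^2)$.
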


To prove or disprove the conjecture, one is tempted to guess the
measure $\mu$ that leads to the worst logarithmic Sobolev constant. A
natural candidate, proposed in \cite[Example21]{Zim}, is the two-point
measure $1/2 (\delta_{Re_1} + \delta_{-Re_1})$ (where $e_1$ denotes
the first basis vector). Note that this candidate is easily seen to
satisfy a logarithmic Sobolev inequality with a bounded constant,
either by the bound on discrete measures or by a simple tensorization
argument of a one-dimensional convolution with a $(d-1)$-dimensional
Gaussian law. To build a counterexample one would have to consider
measures with a number of points that grows with the dimension.

\paragraph{Outline of the paper.}

The paper is organized as follows.  In Section~\ref{sec:perturbation}
we use the perturbation idea of Holley-Stroock, by rewriting the
potential of $\mu\star\gamma_\delta$ as a sum of a convex function and
a bounded perturbation, proving the first two items of
Theorem~\ref{thm:logSob}.  In Section~\ref{sec:mixing}, viewing
$\mu\star\gamma_\delta$ as a mixture of Gaussian measures we prove the
Poincaré and transportation inequalities (Theorems~\ref{thm:Poincare}
and~\ref{thm:transport}) and establish the bound for discrete measures
(third item of Theorem~\ref{thm:partial}). Theorem~\ref{thm:Poincare}
yields the final bound on logarithmic Sobolev constants (the third
item in Theorem~\ref{thm:logSob}) as an easy corollary.  The various
remaining results in Theorem~\ref{thm:partial} are proved in
Section~\ref{sec:partial}.

\section{Perturbation arguments}
\label{sec:perturbation}
\subsection{Large variance}
The density $p$ of $\mu\star\gamma_\delta$ is given explicitly by :
\[
p(z)=\int_{\dR^d} \frac{1}{(2\pi
  \delta^2)^{d/2}}\exp\PAR{-\frac{\ABS{z-x}^2}{2\delta^2}} \mu(dx)
=\frac{1}{(2\pi\delta^2)^{d/2}} \exp\PAR{ - \PAR{
    \frac{\ABS{z}^2}{2\delta^2} + W_\delta(z)} }
\]
where
\begin{align*}
  W_\delta(z)
  &= - \log\int_{\dR^d} \exp\PAR{\frac{z\cdot x}{\delta^2} 
    -\frac{\ABS{x}^2}{2\delta^2}}\,\mu(dx) \\
  &= - \log\int_{\dR^d} \exp\PAR{ \frac{z\cdot x}{\delta^2}}\,\nu(dx) 
    -\log C_\nu
\end{align*}
for $C_\nu = \int_{\dR^d} \exp(-\ABS{x}^2/(2\delta^2)) \mu(dx)$ and
$\nu(dx) = C_\nu^{-1} \exp(-\ABS{x}^2/(2\delta^2)) \mu(dx)$.  Let us
compute the Hessian of $(-W_\delta)$:
\[
\partial_{z_i}(- W_\delta)(z) = \frac{1}{\delta^2} \dE(\tilde X_i)
\quad\text{and}\quad
\partial^2_{z_i z_j}(- W_\delta)(z) = \frac{1}{\delta^4} \Cov(\tilde
X_i, \tilde X_j)
\]
where the distribution of $\tilde X$ is proportional to
$\exp(z\cdot x) d\nu$.  Therefore, for any unit vector $v$,
\[
0\leq \Hess (-W_\delta) v \cdot v \leq \frac{1}{\delta^4} \Var( v\cdot
\tilde X).
\]
Since $v\cdot \tilde X$ lives in $[-R,R]$, its variance is bounded by
$R^2$, so
\[
\Hess (- \log(p)) \geq \PAR{\frac{1}{\delta^2} - \frac{R^2}{\delta^4}}
I_d.
\]
\begin{rem}
  This bound is slightly better than the one given in~\cite{Zim} where
  the variance of $v\cdot\tilde X$ is bounded by $2R^2$.
\end{rem}

In particular, if $\delta>R$, $p$ is log-concave and the Bakry-Émery
criterion yields:
\[
  C_{LS}(\mu\star\gamma_\delta) \leq \frac{\delta^4}{\delta^2- R^2}.
\]
This proves the first item in Theorem~\ref{thm:logSob}.

\subsection{A perturbation argument}
\label{sec:Miclo}

It turns out we can get a (dimension dependent) bound on the
logarithmic Sobolev constant with a very short proof, using the
following trick to decompose the logarithm of the density $p$ as a sum
of a convex function and a bounded perturbation.

Let $a_d = \esp{\ABS{Z}}$ be the expected value of the norm of a
standard Gaussian random variable $Z$ in dimension $d$. Note that
$a_d$ has an explicit expression (we will use below that
$a_1=\sqrt{2/\pi}$) and is in any case smaller than $\sqrt{d}$.

\begin{lem}[Miclo's trick, \cite{Ledoux-revisited,Royer}]
  \label{lem:Royer}
  Suppose the function $W:\dR^d\to\dR$ may be written as $W=W_c+W_l$
  where $\Hess(W_c)\geq \rho I_d$, and $W_l$ is $l$-Lipschitz with
  respect to the Euclidean distance.

  Then for any $\sigma > 0$, one can write $W$ as a sum $U_c + U_b$
  where $\Hess(U_c) \geq \PAR{\rho - \frac{la_1}{\sigma}}I_d$ and
  $U_b$ is bounded by $l\sigma a_d$.

  In particular the measure $Z_W^{-1} \exp(-W)$ satisfies a
  logarithmic Sobolev inequality and
  \[
  C_{LS}\PAR{Z_W^{-1}\exp(-W)} \leq \frac{4}{\rho} \exp\PAR{
    \frac{4}{\rho} l^2 a_1a_d}\,.
  \]
\end{lem}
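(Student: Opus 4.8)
The plan is to prove the two assertions of Lemma~\ref{lem:Royer} in sequence: first the algebraic decomposition $W = U_c + U_b$ with the claimed Hessian lower bound and sup-norm bound, and then deduce the logarithmic Sobolev inequality from it by combining the Bakry--Émery criterion with the Holley--Stroock perturbation lemma.

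For the decomposition, the idea is to mollify the Lipschitz part $W_l$ rather than the convex part. Fix $\sigma > 0$ and set $W_l^\sigma(z) = \esp{W_l(z + \sigma Z)}$, the heat-semigroup regularization of $W_l$ at time $\sigma^2$, where $Z \sim \cN_d(0,I_d)$. I would then write $U_c = W_c + W_l^\sigma$ and $U_b = W_l - W_l^\sigma$, so that $W = U_c + U_b$ trivially. Three facts need to be checked. First, $U_b$ is bounded by $l\sigma a_d$: since $W_l$ is $l$-Lipschitz, $\ABS{W_l(z) - W_l(z+\sigma Z)} \leq l\sigma \ABS{Z}$, and taking expectations gives $\ABS{U_b(z)} \leq l\sigma\, \esp{\ABS{Z}} = l\sigma a_d$. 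Second, $W_l^\sigma$ is smooth and its Hessian is controlled: differentiating under the expectation (Gaussian integration by parts, i.e. the identity $\Hess \esp{g(z+\sigma Z)} = \sigma^{-2}\esp{g(z+\sigma Z)\,(ZZ^\top - I_d)}$, or differentiating the gradient which is $\esp{\nabla W_l(z+\sigma Z)}$ when $W_l$ is a.e. differentiable, then writing $\partial_j\esp{\partial_i W_l(z+\sigma Z)} = \sigma^{-1}\esp{\partial_i W_l(z+\sigma Z) Z_j}$) shows that for any unit vector $v$, $\ABS{\Hess W_l^\sigma\, v\cdot v} \leq \frac{l a_1}{\sigma}$; the one-dimensional constant $a_1 = \sqrt{2/\pi}$ appears because only the component of $Z$ along $v$ enters the relevant estimate $\esp{\ABS{v\cdot Z}} = a_1$. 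Hence $\Hess U_c = \Hess W_c + \Hess W_l^\sigma \geq (\rho - \frac{l a_1}{\sigma}) I_d$. Third, one should note that $W_l^\sigma$ inherits a finite modulus from $W_l$, so no integrability issue arises.

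For the logarithmic Sobolev statement, optimize the choice of $\sigma$. With the decomposition above, $Z_{U_c}^{-1}\exp(-U_c)$ satisfies a log-Sobolev inequality with constant $2/(\rho - \frac{l a_1}{\sigma})$ by Bakry--Émery, and Holley--Stroock inflates this by the factor $\exp(\operatorname{osc} U_b) \leq \exp(2 l\sigma a_d)$, giving
\[
C_{LS}(Z_W^{-1}\exp(-W)) \leq \frac{2}{\rho - \frac{l a_1}{\sigma}}\exp(2 l\sigma a_d).
\]
Now choose $\sigma$ so that $\frac{l a_1}{\sigma} = \rho/2$, i.e. $\sigma = 2 l a_1/\rho$; then the prefactor becomes $4/\rho$ and the exponent becomes $2 l a_d \cdot 2 l a_1/\rho = \frac{4}{\rho} l^2 a_1 a_d$, which is exactly the claimed bound. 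One must of course require $\rho > 0$ for this to make sense, which is implicit in the statement.

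The main obstacle is the rigorous justification of the Hessian bound on $W_l^\sigma$ when $W_l$ is merely Lipschitz (hence only a.e. twice differentiable, indeed only a.e. once differentiable). The cleanest route is to avoid differentiating $W_l$ altogether and instead differentiate the Gaussian kernel: write $\nabla W_l^\sigma(z) = \sigma^{-1}\esp{W_l(z+\sigma Z)\,Z}$ and $\Hess W_l^\sigma(z) = \sigma^{-2}\esp{W_l(z+\sigma Z)(ZZ^\top - I_d)}$, valid by dominated convergence since $W_l$ has at most linear growth. Then the bound $\ABS{\Hess W_l^\sigma\, v\cdot v} \leq \frac{l a_1}{\sigma}$ follows by first recentering (subtracting $W_l(z)$, which is killed by $\esp{ZZ^\top - I_d} = 0$) and then using $\ABS{W_l(z+\sigma Z) - W_l(z)} \leq l\sigma\ABS{Z}$ together with a Gaussian moment computation projected onto $v$. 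This is the only genuinely delicate point; everything else is bookkeeping.
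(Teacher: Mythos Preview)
Your proposal is correct and follows exactly the paper's approach: the same Gaussian mollification $U_\sigma(x)=\esp{W_l(x+\sigma Z)}$, the same decomposition $U_c=W_c+U_\sigma$, $U_b=W_l-U_\sigma$, the same Hessian estimate via $\partial_{ij}U_\sigma=\sigma^{-1}\esp{\partial_iW_l(x+\sigma Z)Z_j}$ together with $\esp{|v\cdot Z|}=a_1$, and the same choice $\sigma=2la_1/\rho$ plugged into Bakry--\'Emery plus Holley--Stroock. One caveat on your alternative ``cleaner route'' via $\Hess W_l^\sigma=\sigma^{-2}\esp{W_l(z+\sigma Z)(ZZ^\top-I_d)}$: the crude recentered bound $|W_l(z+\sigma Z)-W_l(z)|\le l\sigma|Z|$ only gives the dimension-dependent quantity $l\sigma^{-1}\esp{|Z|\,|(v\cdot Z)^2-1|}$, not $la_1/\sigma$; to recover $a_1$ you must first condition on the component of $Z$ orthogonal to $v$ (or apply Stein's identity), which effectively returns you to the paper's first-derivative computation.
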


By way of comparison, it is known (see \cite{AidShi94,Aid98}) that if
$\mu_0 = \exp(-V_0)dx$ satisfies a logarithmic Sobolev inequality,
then $\nu = \exp(-V)dx $ satisfies a defective logarithmic Sobolev
inequality, as soon as the gradient $\nabla(V-V_0)$ satisfies some
exponential integrability condition.  This defective inequality can be
used together with the Poincaré inequality to obtain the logarithmic
Sobolev inequality. This strategy is used in \cite{WW13} (see in
particular \cite[Lemma 2.3]{WW13} for a precise statement of the
perturbation result). It is more general, since it only supposes a
logarithmic Sobolev inequality for the unperturbed measure, and
replaces a boundedness assumption by an integrability condition.  The
trade-off is that the constants are not explicit.

Since the statement of Lemma~\ref{lem:Royer} in
\cite{Ledoux-revisited,Royer} contains a typo in the convexity bound,
let us provide a detailed proof.

\begin{proof}
  Let $\sigma>0$ and $ U_\sigma$ be the following regularized version
  of $W_l$: $U_\sigma(x) = \esp{W_l(x+\sigma Z)}$, where $Z$ is a
  standard $d$-dimensional Gaussian random variable. Let
  $U_c = W_c + U_\sigma$ and $U_b= W_l - U_\sigma$. Since $W_l$ is
  $l$-Lipschitz,
  \[
  \ABS{U_b(x)} = \ABS{ \esp{W_l(x) - W_l(x+\sigma Z)}} \leq l\sigma
  \esp{\ABS{Z}} \leq l\sigma a_d\,.
  \]
  Therefore $U_b$ is bounded.
  
  We now turn to the convexity bound. It is enough to prove that, for
  any unit vector $v$ in $\dR^d$,
  $\Hess U_\sigma v \cdot v \leq \frac{lc_1}{\sigma}$.  First we
  compute the derivatives of $U_\sigma$:
  \begin{align*}
    \partial_i U_\sigma(x)
                  &= (2\pi \sigma^2)^{-d/2}\frac{1}{\sigma^2} 
                    \int_{\dR^d} W_l(y)(y_i-x_i) 
                    \exp\PAR{ - \frac{\ABS{x-y}^2}{2\sigma^2}} dy \\
                  &= (2\pi \sigma^2)^{-d/2}\frac{1}{\sigma^2} 
                    \int_{\dR^d} W_l(x+z) z_i 
                    \exp\PAR{ - \frac{\ABS{z}^2}{2\sigma^2}} dz \\
    \partial_{ij} U_\sigma(x)
                  &= (2\pi \sigma^2)^{-d/2}\frac{1}{\sigma^2} 
                    \int_{\dR^d} \partial_j W_l(x+z) z_i 
                    \exp\PAR{ - \frac{\ABS{z}^2}{2\sigma^2}} dz. 
  \end{align*}
  Now,
  \[
  \Hess U_\sigma v \cdot v= (2\pi \sigma^2)^{-d/2}\frac{1}{\sigma^2}
  \int_{\dR^d} (v\cdot\nabla W_l(x+z)) (v\cdot z)\exp\PAR{ -
    \frac{\ABS{z}^2}{2\sigma^2}} dz\,.
  \]
  Since $W_l$ is $l$-Lipschitz,
  \begin{align*}
    \ABS{\Hess U_\sigma v \cdot v} &\leq 
                                     (2\pi \sigma^2)^{-d/2}\frac{l}{\sigma^2} 
                                     \int_{\dR^d}  \ABS{v\cdot z} 
                                     \exp\PAR{ - \frac{\ABS{z}^2}{2\sigma^2}} dz. 
  \end{align*}
  By rotation invariance of the standard Gaussian distribution, we get
  \[
  \ABS{\Hess U_\sigma v \cdot v} \leq \frac{l}{\sigma^2} \esp{ \sigma
    \ABS{Z_1}} \leq \frac{l a_1}{\sigma}\,.
  \]
  This implies that
  $\Hess(W_c + U_\sigma) \geq (\rho-\frac{la_1}{\sigma})I_d$, as
  claimed.

  The final claim is a direct consequence of the obtained
  decomposition with $\sigma=2la_1/\rho$, the Holley--Stroock
  perturbation Lemma and the Bakry--Émery criterion (see
  \cite{Royer}).
\end{proof}

Let us now use this lemma to prove the one-dimensional bound in
Theorem~\ref{thm:logSob}. Write $-\log(p)$ as
\[
- \log (p(z)) = \PAR{\frac{\ABS{z}^2}{2\delta^2} + \frac{d}{2}
  \log(2\pi\delta^2)} + W_\delta(z).
\]
The first term is $\delta^{-2}$-convex. Since
\[
\nabla W_\delta(z)= -\frac{1}{\delta^2} \frac{\int_{\dR^d}\!  x
  \exp\PAR{\frac{z\cdot x}{\delta^2}} \,\nu(dx)} {\int_{\dR^d}\!
  \exp\PAR{\frac{z\cdot x}{\delta^2}} \,\nu(dx)},
\]
and $\nu(B_d(0,R))=1$, $W_\delta$ is $R/\delta^2$-Lipschitz on
$\dR^d$.  Lemma~\ref{lem:Royer} then yields
\[
C_{LS}(\mu\star\gamma_\delta) \leq 4\delta^2 \exp\PAR{ 4a_1a_d
  R^2\delta^{-2}}\,.
\]
This gives a first dimension dependent bound that is not comparable to
the one from Theorem~\ref{thm:zimm}. In dimension $1$, since
$a_1 = \sqrt{2/\pi}$, we get the bound claimed in the second item of
Theorem~\ref{thm:logSob}.

\section{Mixture arguments}
\label{sec:mixing}
\subsection{Poincaré inequality}
\label{sec:Poincare}
In this section we denote by $\gamma_{x,\delta}$ the distribution
$\cN_d(x,\delta^2I_d)$. Recall that
$\mu \star \gamma_\delta = \int_{\dR^d} \gamma_{x,\delta} d\mu(x)$.
The variance of a function $f$ under the mixture
$\mu \star \gamma_\delta$ can be classically decomposed as
\begin{align*}
  \Var_{\mu \star \gamma_\delta}(f) 
  &= \int_{\dR^d} \Var_{\gamma_{x,\delta}} (f) d\mu(x)  
    + \Var_{\mu} \PAR{x\mapsto \int f d\gamma_{x,\delta}} \\
  &= A + B.
\end{align*}
Since $\gamma_{x,\delta}$ satisfies the Poincaré inequality with
constant $\delta^2$, the first term $A$ is bounded by
\[
\delta^2 \int_{\dR^d} \int_{\dR^d} \ABS{\nabla f}^2 d\gamma_{x,\delta}
d\mu(x) = \delta^2 \int_{\dR^d} \ABS{\nabla f}^2
d(\mu\star\gamma_\delta)\,.
\]
For the second term $B$ let
$g:x\mapsto \int_{\dR^d} f d\gamma_{x,\delta}$. Duplicating variables
yields
\[
B = \frac{1}{2} \iint_{\dR^d\times\dR^d} (g(x) - g(y))^2 d\mu(x)
d\mu(y).
\]
Now
\begin{align*}
  (g(x) - g(y))^2 &= \PAR{ \int_{\dR^d} f d\gamma_{x,\delta} 
                    - \int_{\dR^d} f d\gamma_{y,\delta} }^2 
                    = \PAR{ \int_{\dR^d} f\PAR{1 
                    - \frac{d\gamma_{y,\delta}}{d\gamma_{x,\delta}}} 
                    d\gamma_{x,\delta} }^2 \\
                  &= \PAR{ \Cov_{\gamma_{x,\delta}} 
                    \PAR{f,\PAR{1 - \frac{d\gamma_{y,\delta}}
                    {d\gamma_{x,\delta}}}}}^2 \\
                  &\leq \Var_{\gamma_{x,\delta}}(f) \Var_{\gamma_{x,\delta}} 
                    \PAR{ 1 - \frac{d\gamma_{y,\delta}}{d\gamma_{x,\delta}}}
\end{align*}
by Cauchy-Schwarz inequality. For the first factor we reapply the
Poincaré inequality for the Gaussian measure~$\gamma_{x,\delta}$. The
second factor is the $\chi^2$ divergence between the Gaussian
distributions $\gamma_{x,\delta}$ and $\gamma_{y,\delta}$.  An easy
computation shows that this divergence is
$\PAR{\exp\PAR{\ABS{x-y}^2/\delta^2} - 1}$; since $\ABS{x-y}$ is
bounded by $2R$, we get
\[
(g(x) - g(y))^2 \leq \delta^2\PAR{\exp\PAR{4R^2/\delta^2}-1}
\int_{\dR^d}\ABS{\nabla f}^2 d\gamma_{x,\delta}.
\]
Reintegrating with respect to $\mu$ yields
\[
B \leq \delta^2 \PAR{\exp(4R^2/\delta^2)-1} \int_{\dR^d} \ABS{\nabla
  f}^2 d(\mu\star\gamma_\delta),
\]
so that the measure $\mu\star\gamma_\delta$ satisfies a Poincaré
inequality with a constant
\[
C_P(\mu\star\gamma_\delta)\leq \delta^2 \exp(4R^2/\delta^2)\,.
\]

\subsection{A mild dependence on \texorpdfstring{$d$}{d} for
  logarithmic Sobolev constants via Lyapunov functions}

The proof of the logarithmic Sobolev inequality in dimension greater
than $1$ in \cite{Zim} is based on a criterion from~\cite{CGW10}.
This criterion uses a Lyapunov function approach to prove a so-called
defective logarithmic Sobolev inequality, which can then be
strengthened using the Poincaré inequality.  In~\cite{Zim}, this
Poincaré inequality is itself obtained by Lyapunov criteria, with
constants depending exponentially on the dimension.  Simply plugging
our dimension-free Poincaré inequality in the argument of \cite{CGW10}
gives a much better bound.

Let us first recall the criterion, in the form used in \cite{Zim},
where the constants are explicitly written.
\begin{thm}[Logarithmic Sobolev inequality via Lyapunov functions,
  \cite{CGW10}]
  Suppose that $V$ satisfies
  \begin{align*}
    \Hess(V) \geq -K I_d
  \end{align*}
  with $K \geq 0$, and there exists a ``Lyapunov function'', that is,
  a function $W\geq 1$ such that
  \begin{equation}
    \label{eq:lyap}
    \Delta W - \langle \nabla V, \nabla W \rangle \leq (b-c\ABS{x}^2) W
  \end{equation}
  for some positive constants $b$, $c$.

  Suppose that $\nu=Z_V^{-1}\exp(-V)dx$ satisfies a Poincaré
  inequality with constant $C_P(\nu)$.  Let $A$ and $B$ be defined by
  \begin{align*}
    A &= \frac{2}{c} \PAR{\eps^{-1} + K/2} + \eps,  \\
    B &= \frac{2}{c} \PAR{\eps^{-1} + K/2}\PAR{b+c 
        \int_{\dR^d} \ABS{x}^2 d\nu(x)}.
  \end{align*}

  Then $\nu$ satisfies a logarithmic Sobolev inequality and
  $C_{LS}(\nu)\leq A+ (B+2)C_P(\nu)$.
\end{thm}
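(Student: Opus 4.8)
The plan is to use the two hypotheses to produce a \emph{defective} logarithmic Sobolev inequality $\Ent_\nu(f^2)\le A\int\ABS{\nabla f}^2\,d\nu+B\int f^2\,d\nu$, with precisely the constants $A$, $B$ of the statement (for the given $\eps>0$), and then to remove the defect by means of the assumed Poincaré inequality. Since the logarithmic Sobolev inequality need only be checked on smooth, bounded, compactly supported functions (the general case following by density), and since both sides of the inequalities below are homogeneous of degree~$2$ in $f$, I may fix such an $f$ with $\int f^2\,d\nu=1$. Besides the hypotheses and the results above, I will use two classical tools not recalled in the excerpt: the HWI inequality of Otto--Villani~\cite{OV00} and Rothaus's lemma.

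\emph{Step 1: a moment bound from the Lyapunov function.} Let $\cL=\Delta-\nabla V\cdot\nabla$ be the generator of $\nu$, which is symmetric in $L^2(\nu)$. Dividing \eqref{eq:lyap} by $W\ge1$ gives $c\ABS{x}^2-b\le-\cL W/W$. Multiplying by $f^2\ge0$, integrating against $\nu$, integrating by parts so that $-\int\frac{f^2}{W}\,\cL W\,d\nu=\int\nabla\PAR{\frac{f^2}{W}}\cdot\nabla W\,d\nu$, then expanding the gradient and discarding the cross term via $2\,\frac{f\,\nabla f\cdot\nabla W}{W}\le\ABS{\nabla f}^2+\frac{f^2\ABS{\nabla W}^2}{W^2}$ (which exactly cancels the $\ABS{\nabla W}^2$ contribution), one is left with
\[
  c\int\ABS{x}^2 f^2\,d\nu\ \le\ \int\ABS{\nabla f}^2\,d\nu+b .
\]
Taking $f\equiv1$ shows in passing that $\int\ABS{x}^2\,d\nu\le b/c<\infty$, so the second moment of $\nu$ occurring in $B$ is finite.

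\emph{Step 2: the HWI inequality, and tightening.} Since $\Hess V\ge-KI_d$, the measure $\nu$ satisfies the HWI inequality with curvature parameter $-K$; applied to the probability measure $f^2\nu$ it reads
\[
  \Ent_\nu(f^2)\ \le\ 2\sqrt{\cT_2(f^2\nu,\nu)}\,\PAR{\int\ABS{\nabla f}^2\,d\nu}^{1/2}+\frac K2\,\cT_2(f^2\nu,\nu),
\]
where $\cT_2$ denotes the quadratic transport cost. Bounding the mixed term through $2\sqrt{ts}\le\eps^{-1}t+\eps s$ (with $t=\cT_2(f^2\nu,\nu)$, $s=\int\ABS{\nabla f}^2\,d\nu$), and estimating $\cT_2(f^2\nu,\nu)$ from above by the cost of the product coupling together with $\ABS{x-y}^2\le2\ABS{x}^2+2\ABS{y}^2$, i.e.\ $\cT_2(f^2\nu,\nu)\le2\int\ABS{x}^2 f^2\,d\nu+2\int\ABS{x}^2\,d\nu$, gives
\[
  \Ent_\nu(f^2)\ \le\ \eps\int\ABS{\nabla f}^2\,d\nu+\PAR{\tfrac2\eps+K}\PAR{\int\ABS{x}^2 f^2\,d\nu+\int\ABS{x}^2\,d\nu}.
\]
Using Step~1 to replace $\int\ABS{x}^2 f^2\,d\nu$ by $\tfrac1c\int\ABS{\nabla f}^2\,d\nu+\tfrac bc$ and collecting terms, the coefficient of $\int\ABS{\nabla f}^2\,d\nu$ becomes $\eps+\tfrac1c\PAR{\tfrac2\eps+K}=A$ and the additive constant becomes $\PAR{\tfrac2\eps+K}\PAR{\tfrac bc+\int\ABS{x}^2\,d\nu}=B$; by the homogeneity noted above this is the defective logarithmic Sobolev inequality $\Ent_\nu(f^2)\le A\int\ABS{\nabla f}^2\,d\nu+B\int f^2\,d\nu$ for all $f$. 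To remove the defect, decompose $f=\bar f+\int f\,d\nu$ with $\int\bar f\,d\nu=0$: Rothaus's lemma gives $\Ent_\nu(f^2)\le\Ent_\nu(\bar f^2)+2\Var_\nu(f)$, and applying the defective inequality to $\bar f$ together with $\Var_\nu(\bar f)=\Var_\nu(f)\le C_P(\nu)\int\ABS{\nabla f}^2\,d\nu$ yields $\Ent_\nu(f^2)\le\PAR{A+(B+2)C_P(\nu)}\int\ABS{\nabla f}^2\,d\nu$, which is the claim.

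The step I expect to be the main obstacle is Step~2, specifically invoking HWI with a \emph{negative} lower curvature bound $-K$: this requires $V$ to be semiconvex and enough regularity and integrability of $\nu$ (and of $f^2\nu$) to justify the displacement-interpolation argument underlying HWI, and a little care in the density reduction to nice $f$, where the finiteness of $\int\ABS{x}^2\,d\nu$ from Step~1 guarantees $\cT_2(f^2\nu,\nu)<\infty$. Everything else --- the integration by parts against the Lyapunov function, the two elementary inequalities, and the Rothaus tightening --- is routine bookkeeping.
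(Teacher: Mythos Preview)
The paper does not prove this statement; it is quoted from \cite{CGW10} and used as a black box. Your proposal is correct and in fact reproduces precisely the argument of \cite{CGW10}: the Lyapunov condition is converted into the weighted moment estimate $c\int\ABS{x}^2f^2\,d\nu\le\int\ABS{\nabla f}^2\,d\nu+b\int f^2\,d\nu$ via integration by parts, this is fed into the HWI inequality (valid under the semiconvexity hypothesis $\Hess V\ge -KI_d$) after the crude transport bound $\cT_2(f^2\nu,\nu)\le 2\int\ABS{x}^2f^2\,d\nu+2\int\ABS{x}^2\,d\nu$, and the resulting defective logarithmic Sobolev inequality is tightened by Rothaus's lemma together with the Poincaré inequality. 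The constants you obtain match those of the statement exactly, and the technical caveat you flag about HWI under negative curvature is handled in \cite{OV00} (and, for the present purposes, it suffices to work with smooth compactly supported $f$ and smooth $V$, so the displacement interpolation argument goes through).
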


Zimmermann proves in \cite{Zim} that \eqref{eq:lyap} holds with
$b = d/(8\delta^2) + R^2/(32\delta^4)$ and $c = \frac{1}{64\delta^4}$
for the function $W(x)=\exp\PAR{\frac{1}{64\delta^4}}$.  Using the
bound $K\leq R^2/\delta^4$ and choosing $\eps=2/K$, this proves that,
for $\delta\leq R$, thanks to the bound on the Poincaré constant,
\[
C_{LS}(\mu \star \gamma_\delta) \leq \PAR{K_1 d + K_2
  \frac{R^2}{\delta^2}} R^2 \exp\PAR{4\frac{R^2}{\delta^2}}
\]
for some universal constants $K_1$, $K_2$, which is the general bound
announced in Theorem~\ref{thm:logSob}.

\subsection{A bound for uniform discrete measures}
Suppose in this section that $\mu$ is a uniform probability measure on
$N$ points in $B_d(0,R)$:
\[
\mu = \frac{1}{N} \sum_{i=1}^N \delta_{x_i}.
\]
The distribution of $S = X+Z_\delta$ is a mixture of $N$ Gaussian laws
with respective means $x_i$ and common covariance matrix
$\delta^2I_d$. Poincaré and logarithmic Sobolev inequalities for
mixtures of two measures have been studied by Chafa\"i and Malrieu in
\cite{CM10}; Schlichting and Menz \cite{Sch,MS14} have used and
generalized their results to prove Eyring-Kramers formul\ae.  The
decomposition of the variance used in Section~\ref{sec:Poincare} has
the following analogue for entropies:
\begin{equation}
  \label{eq:dec_entropy}
  \Ent_{\mu\star\gamma_\delta}\PAR{f^2} = \int_{\dR^d} \Ent_{\gamma_{x,\delta}}\PAR{f^2} 
  d\mu(x) + \Ent_\mu \PAR{x\mapsto \int_{\dR^d} f^2 d\gamma_{x,\delta}}.
\end{equation}
To bound the second term, we use the following result, that is
essentially a consequence of the discrete logarithmic Sobolev
inequality for the complete graph proved by Diaconis and Saloff-Coste
in \cite{DSC96}.
\begin{thm}
  [Upper bound for the entropy when $\mu$ is discrete, \cite{Sch}]
  \label{th:Sch}
  Let $\mu = \sum_{i=1}^N Z_i \mu_i$ be a finite mixture of
  measures. Let $Z^\star = \min_{1\leq i\leq N} (Z_i)$. Then for any
  $f$,
  \[
  \Ent_\mu\PAR{i \mapsto \int_{\dR^d} f^2d\mu_i} \leq
  \frac{1}{\Lambda(Z_\star, 1 - Z_\star)}
  \PAR{ \sum_{i=1}^N Z_i \Var_{\mu_i}(f) + \Var_\mu\PAR{i\mapsto
      \int_{\dR^d} f d\mu_i}},
  \]
  where $\Lambda(p,q) = (p-q)/(\log p - \log q)$.
\end{thm}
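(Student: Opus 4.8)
The plan is to recognise the asserted inequality as a reformulation, \emph{via} one application of Cauchy--Schwarz, of the logarithmic Sobolev inequality for the finite space $\{1,\dots,N\}$ carrying the measure $\mu=\sum_iZ_i\delta_i$ together with the Dirichlet form of the complete graph; the genuine input is then the complete-graph logarithmic Sobolev inequality of \cite{DSC96}, and the remaining work is elementary.

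The first step records that the complete-graph chain on $\{1,\dots,N\}$ with stationary measure $(Z_i)_i$ (the kernel $P(i,j)=Z_j$) has Dirichlet form equal to the variance,
\[
\mathcal E(g,g)=\sum_iZ_ig_i\PAR{g_i-\sum_jZ_jg_j}=\Var_\mu(g),
\]
so that the evaluation of the logarithmic Sobolev constant of such chains in \cite{DSC96} gives, for every $g\colon\{1,\dots,N\}\to\dR$,
\begin{equation}
  \label{eq:finiteLSI}
  \Ent_\mu(g^2)\leq\frac1{\Lambda(Z_\star,1-Z_\star)}\,\Var_\mu(g),\qquad Z_\star=\min_iZ_i .
\end{equation}
The logarithmic mean $\Lambda$ already appears in the two-point case of \cite{DSC96}, and the fact that for general $N$ the constant still depends only on the smallest weight is exactly the point exploited in \cite{Sch}.

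The second step applies \eqref{eq:finiteLSI} to $g_i\defEgal\sqrt{\int_{\dR^d}f^2\,d\mu_i}=\NRM{f}_{L^2(\mu_i)}\geq0$, for which $g_i^2=\int_{\dR^d}f^2\,d\mu_i$, so that the left-hand side of the statement is precisely $\Ent_\mu(g^2)$. It then remains to bound $\Var_\mu(g)$ by the bracket on the right. Writing $b_i=\int_{\dR^d}f\,d\mu_i$ and using $g_i^2=\Var_{\mu_i}(f)+b_i^2$, one has $g_i\geq\ABS{b_i}$, whence $\sum_iZ_ig_i\geq\sum_iZ_i\ABS{b_i}\geq\ABS{\sum_iZ_ib_i}$, and therefore
\[
\Var_\mu(g)=\sum_iZ_ig_i^2-\PAR{\sum_iZ_ig_i}^2\leq\sum_iZ_ig_i^2-\PAR{\sum_iZ_ib_i}^2=\sum_iZ_i\Var_{\mu_i}(f)+\Var_\mu\PAR{i\mapsto\int_{\dR^d}f\,d\mu_i}.
\]
Substituting this into \eqref{eq:finiteLSI} yields the claim.

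The main obstacle is the first step: the precise bound $1/\Lambda(Z_\star,1-Z_\star)$ for the complete-graph logarithmic Sobolev constant, depending on $Z_\star$ alone, is substantially finer than what one gets by brute force---for instance a Holley--Stroock perturbation from the uniform measure degrades the logarithmic dependence $\log(1/Z_\star)$ into a polynomial one. This sharp estimate is essentially the content of \cite{DSC96}, as spelled out in \cite{Sch}; once \eqref{eq:finiteLSI} is available, the Cauchy--Schwarz reduction above is routine.
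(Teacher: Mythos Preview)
Your proof is correct and follows the same route the paper indicates: the paper's proof is just the sentence ``This follows from \cite[Corollary 2.18]{Sch}, using the result from \cite{DSC96} instead of the alternate \cite[Lemma 2.13]{Sch}'', and what you have written is precisely an unpacking of that citation---apply the complete-graph logarithmic Sobolev inequality of \cite{DSC96} (your \eqref{eq:finiteLSI}) to $g_i=\|f\|_{L^2(\mu_i)}$, then bound $\Var_\mu(g)$ by the right-hand bracket via $g_i\geq|b_i|$. The elementary variance bound you give is exactly the mechanism behind Schlichting's Corollary~2.18, so there is no methodological difference, only a difference in the level of detail supplied.
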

\begin{proof}
  This follows from \cite[Corollary 2.18]{Sch}, using the result from
  \cite{DSC96} instead of the alternate \cite[Lemma 2.13]{Sch}.
\end{proof}

Coming back to the decomposition~\eqref{eq:dec_entropy}, we can use
the Gaussian logarithmic Sobolev inequality on the first term and
Theorem~\ref{th:Sch} on the second term to get:

\[
\Ent_{\mu\star\gamma_\delta}(f^2) \leq 2\delta^2 \int_{\dR^d}
\ABS{\nabla f}^2 d\mu(x) + \frac{1}{\Lambda(1/N, (N-1)/N)}
\PAR{ \frac{1}{N} \sum_{i=1}^N \Var_{\gamma_{\delta,x_i}}(f) +
  \Var_\mu\PAR{i\mapsto \int_{\dR^d} f d\gamma_{\delta,x_i}}}.
\]
The last bracket is the variance $\Var_{\mu\star\gamma_\delta}(f)$,
which is bounded thanks to the Poincaré inequality.  Since
\( \frac{1}{\Lambda(p,1-p)} \leq \frac{ \log(1/p)}{1-2p} \),
we finally get
\[
C_{LS}(\mu\star\gamma_\delta)\leq 2\delta^2 + 3 \log(N)
\delta^2\exp(4R^2/\delta^2).
\]

\subsection{Dimension free transport-entropy inequality for the
  \texorpdfstring{$\ell^4$}{l4} norm}
We now adapt the arguments of Section~\ref{sec:Poincare} to prove that
the measure $\mu\star\gamma_\delta$ satisfies a transport-entropy
inequality with a constant depending only on $R$ and $\delta$.  It is
more convenient in this section to state and prove all intermediate
results for $\delta=1$.  In the final result we come back to the
general case by an immediate scaling argument.

The first step is to establish a weighted version of the Poincaré
inequality.

\begin{lem}[Weighted Poincaré inequality for Gaussian measures]
  \label{lem:wP-Gauss}
  For all $x \in \dR^d$, the Gaussian measure $\gamma_{x,1}$ satisfies
  the following weighted Poincaré inequality: for all $\mathcal{C}^1$
  function $f$,
  \[
  \Var_{\gamma_{x,1}}(f) \leq c (1+|x|^2) \int_{\dR^d}\sum_{i=1}^d
  \frac{1}{1+u_i^2} (\partial_if(u))^2\,d\gamma_{x,1}(u)\,,
  \]
  where $c$ is a positive universal constant.
\end{lem}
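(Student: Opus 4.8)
The plan is to reduce the claimed weighted Poincaré inequality to the standard Poincaré inequality for the one-dimensional Gaussian measure via tensorization, after a suitable change of variables. First I would note that by translation it suffices to control $\Var_{\gamma_{x,1}}(f)$ in terms of a weight that degenerates like $1/(1+u_i^2)$, and that the worst direction is the one along $x$; the behaviour in the orthogonal directions is handled by the ordinary Gaussian Poincaré inequality, which is exactly the $|x|=0$ case with weight bounded below by a constant on any fixed compact set but, crucially, we cannot use that globally since $1/(1+u_i^2)\to 0$. So the real content is a \emph{one-dimensional} weighted inequality: for the standard Gaussian $\gamma$ on $\dR$ and any shift $m\in\dR$,
\[
\Var_{\gamma(\cdot-m)}(f)\leq c(1+m^2)\int_{\dR}\frac{1}{1+u^2}f'(u)^2\,d\gamma(u-m)\,.
\]
I would establish this by the standard Muckenhoupt / Hardy-type criterion for weighted Poincaré inequalities on the line: a measure $d\mu=e^{-V}du$ with median $\theta$ satisfies $\Var_\mu(f)\leq C\int w\, f'^2 d\mu$ iff the two Hardy constants $B_\pm=\sup_{t}\mu((\,\cdot\,\text{beyond }t))\int_{\theta}^{t} (w e^{-V})^{-1}$ are finite, with $C$ comparable to $\max(B_+,B_-)$. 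Here $e^{-V(u)}\propto e^{-(u-m)^2/2}$ and $w(u)=1/(1+u^2)$, so one must bound, for $t$ large, $\bigl(\int_t^\infty e^{-(u-m)^2/2}du\bigr)\cdot\int_{\theta}^t (1+u^2)e^{(u-m)^2/2}du$ and its mirror image, uniformly over $m$, and show the supremum is $O(1+m^2)$.

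The estimate of these Hardy integrals is the main computational step, and I expect it to be the principal obstacle — not because it is deep, but because one has to track the factor $1+u^2$ against the Gaussian tails uniformly in the shift $m$. The key observation is that $\int_t^\infty e^{-(u-m)^2/2}\,du\asymp \frac{1}{1+|t-m|}e^{-(t-m)^2/2}$ for $t\geq m$, while the inner integral $\int_{\theta}^t (1+u^2)e^{(u-m)^2/2}\,du$ is dominated by its endpoint and is $\asymp (1+t^2)\frac{1}{1+|t-m|}e^{(t-m)^2/2}$ for $t$ large; multiplying, the exponentials cancel and one is left with a quantity of order $(1+t^2)/(1+|t-m|)^2$. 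Splitting into the ranges $t\leq 2|m|$ (where $1+t^2\lesssim 1+m^2$) and $t\geq 2|m|$ (where $|t-m|\asymp t$, so the ratio is $O(1)$), one gets the uniform bound $O(1+m^2)$, and the symmetric range $t\leq\theta$ is analogous. This gives the one-dimensional inequality with an absolute constant times $(1+m^2)$.

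Finally I would tensorize: writing $\gamma_{x,1}=\bigotimes_{i=1}^d \gamma(\cdot-x_i)$ and using the standard tensorization of the variance (the Efron–Stein / Poincaré tensorization bound $\Var_{\bigotimes\mu_i}(f)\leq\sum_i\int \Var_{\mu_i}(f)\,d\bigotimes_{j\ne i}\mu_j$), each coordinate contributes $c(1+x_i^2)\int \frac{1}{1+u_i^2}(\partial_i f)^2\,d\gamma_{x,1}$; since $1+x_i^2\leq 1+|x|^2$ for every $i$, summing yields exactly
\[
\Var_{\gamma_{x,1}}(f)\leq c(1+|x|^2)\int_{\dR^d}\sum_{i=1}^d\frac{1}{1+u_i^2}(\partial_i f(u))^2\,d\gamma_{x,1}(u)\,,
\]
with the same universal $c$. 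The only points requiring care are making sure the Hardy criterion is applied with the median (not the mean) as the base point, and that the constant from Muckenhoupt's theorem is genuinely universal and does not hide a dependence on $V$ beyond what the Hardy constants capture.
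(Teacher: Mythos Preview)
Your proposal is correct and follows the same overall architecture as the paper: a one-dimensional weighted Poincar\'e inequality via the Muckenhoupt/Hardy criterion, followed by tensorization using $1+x_i^2\leq 1+|x|^2$. The difference lies only in how the one-dimensional step is executed. You apply Muckenhoupt directly to the shifted measure $e^{-(u-m)^2/2}\,du$ with the target weight $1/(1+u^2)$, and then track the $m$-dependence through asymptotic estimates of the Hardy constants; this works but requires the case splitting $t\lesssim |m|$ versus $t\gtrsim |m|$ and some care near the median. The paper instead applies Muckenhoupt \emph{once}, for the centered standard Gaussian, obtaining
\[
\Var_{\gamma}(f)\leq c\int_{\dR}\frac{1}{1+u^2}f'(u)^2\,d\gamma(u)
\]
with a single universal $c=\sup_{y\geq 0}\int_y^\infty e^{-u^2/2}\,du\int_0^y(1+u^2)e^{u^2/2}\,du$; then, substituting $f(u)=g(x+u)$ gives the same inequality for $\gamma_{x,1}$ but with weight $1/(1+(v-x)^2)$, and the elementary bound $1+v^2\leq 2(1+x^2)(1+(v-x)^2)$ converts this to the desired weight $1/(1+v^2)$ at the cost of the factor $2(1+x^2)$. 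This avoids all $m$-uniform Hardy estimates and makes the origin of the $(1+|x|^2)$ factor transparent.
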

\begin{proof}
  Let us first establish the result for the standard Gaussian
  distribution $\gamma= \cN(0,1)$ in dimension $d=1$. According to the
  well known Muckenhoupt criterion for Hardy type inequalities (see
  \textit{e.g.} \cite[Theorem 6.2.1]{Log-Sob}), the inequality
  \[
  \int_0^\infty \PAR{f(u)-f(0)}^2\,d\gamma(u) \leq c \int_0^\infty
  \frac{1}{1+u^2} f'(u)^2\, d\gamma(u)
  \]
  holds for all $\mathcal{C}^1$ function $f: [0,\infty) \to \dR$, with
  the constant
  \[
  c = \sup_{y\geq 0} \int_y^\infty e^{-u^2/2} \,du\int_0^y
  (1+u^2)e^{u^2/2}\,du <\infty.
  \]
  Similarly, for any $\mathcal{C}^1$ function $f$ on $(-\infty,0]$, it
  holds
  \[
  \int_{-\infty}^0 \PAR{f(u)-f(0)}^2\,d\gamma(u) \leq c
  \int_{-\infty}^0 \frac{1}{1+u^2} f'(u)^2\, d\gamma(u)\,.
  \]
  Therefore, if $f$ is now $\mathcal{C}^1$ function on $\dR$, one has
  \[
  \Var_{\gamma}(f) \leq \int_{\dR} (f(u)-f(0))^2\,d\gamma(u) \leq c
  \int_{\dR} \frac{1}{1+u^2} f'(u)^2\, d\gamma(u)\,.
  \]
  Applying this inequality to $f(u) = g(x + u)$, $u \in \dR$, yields
  \[
  \Var_{\gamma_{x,1}}(g) \leq c \int_{\dR} \frac{1}{1+(v-x)^2}
  g'(v)^2\, d\gamma_{x,1}(v)\,.
  \]
  Since $1+v^2 \leq 1+2(v-x)^2 + 2x^2 \leq 2(1+x^2) (1 + (x-v)^2)$,
  the claim holds for the Gaussian measure~$\gamma_{x,1}$ in
  dimension~$1$.

  To prove the general case, just remark that, for any $x\in \dR^d$,
  $\gamma_{x,1}$ is the product of the (one dimensional) measures
  $\gamma_{1,x_i}$.  The classical tensorization property for
  Poincaré--type inequalities yields
  \[
  \Var_{\gamma_{x,1}}(g) \leq 2c \max_{i}(1+x_i^2) \int_{\dR}
  \sum_{i=1}^d \frac{1}{1+v_i^2} (\partial_i g(v))^2\,
  d\gamma_{x,1}(v)\,,
  \]
  which completes the proof.
\end{proof}

This result extends to mixture of Gaussian measures.
\begin{prop}[Weighted Poincaré inequality for $\mu\star\gamma_1$]
  Let $\mu$ be a probability measure on $\dR^d$ supported in
  $B_d(0,R)$. The probability $\mu \star \gamma_\delta$ satisfies the
  following weighted Poincaré inequality: for all $\mathcal{C}^1$
  function $f$ on $\dR^d$,
  \begin{equation}
    \label{eq:wPoinc}
    \Var_{\mu\star \gamma_1} (f) 
    \leq C(R) \int_{\dR^d} \sum_{i=1}^d \frac{1}{1+u_i^2} 
    (\partial_i f(u))^2\,d(\mu\star\gamma_1)(u)\,,
  \end{equation}
  with $C(R) =c (1+R^2)e^{4R^2}$ for some universal constant $c$.
\end{prop}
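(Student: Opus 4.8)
The plan is to follow the same variance-decomposition strategy used for the ordinary Poincaré inequality in Section~\ref{sec:Poincare}, but now carrying the weight $\sum_i (1+u_i^2)^{-1}(\partial_i f)^2$ through both terms. Write
\[
\Var_{\mu\star\gamma_1}(f) = \int_{\dR^d} \Var_{\gamma_{x,1}}(f)\,d\mu(x) + \Var_\mu\PAR{x\mapsto \int f\,d\gamma_{x,1}} = A + B.
\]
For the term $A$, apply Lemma~\ref{lem:wP-Gauss} pointwise in $x$: since $|x|\leq R$, each $\Var_{\gamma_{x,1}}(f)$ is bounded by $c(1+R^2)\int \sum_i (1+u_i^2)^{-1}(\partial_i f)^2\,d\gamma_{x,1}$, and integrating against $\mu$ turns this into $c(1+R^2)$ times the weighted Dirichlet form of $\mu\star\gamma_1$. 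So $A$ is already of the desired form with a constant $c(1+R^2)$.

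For the term $B$, I would reproduce the duplication-of-variables computation from Section~\ref{sec:Poincare} verbatim up to the Cauchy--Schwarz step: writing $g(x)=\int f\,d\gamma_{x,1}$,
\[
B = \frac12 \iint (g(x)-g(y))^2\,d\mu(x)\,d\mu(y),
\]
and $(g(x)-g(y))^2 \leq \Var_{\gamma_{x,1}}(f)\cdot\Var_{\gamma_{x,1}}\PAR{1-\frac{d\gamma_{y,1}}{d\gamma_{x,1}}}$. The $\chi^2$-divergence factor is again $\exp(|x-y|^2)-1 \leq \exp(4R^2)-1$. But now I bound the first factor $\Var_{\gamma_{x,1}}(f)$ using the \emph{weighted} Poincaré inequality (Lemma~\ref{lem:wP-Gauss}) rather than the plain one, getting $c(1+R^2)$ times the weighted Dirichlet form against $\gamma_{x,1}$. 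Reintegrating against $\mu(dx)\,\mu(dy)$ collapses the $y$-integral (the $\chi^2$ bound is already a constant) and gives $B \leq c(1+R^2)(e^{4R^2}-1)\int \sum_i (1+u_i^2)^{-1}(\partial_i f)^2\,d(\mu\star\gamma_1)$. Adding the bounds on $A$ and $B$ yields the claimed inequality with $C(R) = c(1+R^2)e^{4R^2}$ after adjusting the universal constant $c$.

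The one point requiring a little care — and the only real obstacle — is the bookkeeping of which measure the weighted Dirichlet form is computed against at each stage. In term $A$ the weight-carrying inequality is applied to $\gamma_{x,1}$ and integration in $x$ directly assembles $\int\cdots d(\mu\star\gamma_1)$; in term $B$ one must check that after using Lemma~\ref{lem:wP-Gauss} for the factor $\Var_{\gamma_{x,1}}(f)$ and the $\chi^2$ bound for the other factor, the remaining double integral $\iint \PAR{c(1+R^2)\int \sum_i (1+u_i^2)^{-1}(\partial_i f(u))^2 d\gamma_{x,1}(u)}(e^{4R^2}-1)\,d\mu(x)d\mu(y)$ indeed reduces (the $y$-marginal integrates to $1$) to $c(1+R^2)(e^{4R^2}-1)$ times the weighted Dirichlet form of $\mu\star\gamma_1$. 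Everything else is an exact transcription of the unweighted argument, so the proof is short.
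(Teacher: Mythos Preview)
Your proposal is correct and is exactly the approach the paper takes: the paper's proof literally consists of invoking Lemma~\ref{lem:wP-Gauss} for $|x|\leq R$ and then saying ``inserting these weighted Poincaré inequalities into the proof given in Section~\ref{sec:Poincare} immediately yields the desired bound.'' You have simply written out that insertion explicitly, and the bookkeeping you flag (the $y$-integral collapsing to $1$, the constants combining to $c(1+R^2)e^{4R^2}$) is handled correctly.
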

\begin{proof}
  According to Lemma~\ref{lem:wP-Gauss}, for all $x \in \dR^d$ such
  that $|x|\leq R$, it holds
  \[
  \Var_{\gamma_{x,1}}(f) \leq c (1+R^2)\int_{\dR} \sum_{i=1}^d
  \frac{1}{1+u_i^2} (\partial_i f(u))^2\, d\gamma_{x,1}(u)
  \]
  for all $\mathcal{C}^1$ function $f$ on $\dR^d$. Inserting these
  weighted Poincaré inequalities into the proof given in
  Section~\ref{sec:Poincare} immediately yields the desired bound.
\end{proof}

We now arrive at a first transportation-entropy inequality.
\begin{thm}
  \label{thm:complicatedTransport}
  Let $\mu$ be a probability measure on $\dR^d$ having its support in
  $B_d(0,R)$. The probability $\mu \star \gamma_1$ satisfies the
  following transport-entropy inequality: for any probability measure
  $\nu$ on $\dR^d$,
  \[
  \cT_k(\nu,\mu\star\gamma_1) \leq c'(1+R^2)\exp(4R^2) H(\nu |\mu\star
  \gamma_1)\,,
  \]
  where $c'$ is a universal constant and $\cT_k$ is the optimal
  transport cost related to the cost function
  \[
  k(x,y) = \min\PAR{|x-y|^2 ; |x-y|} + \min\PAR{\|x-y\|_4^4,
    \|x-y\|_4^2},\qquad \forall x,y \in \dR^d\,.
  \]
\end{thm}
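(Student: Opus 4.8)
The plan is to derive this transport-entropy inequality from the weighted Poincaré inequality~\eqref{eq:wPoinc} via the standard criterion linking weighted Poincaré inequalities to transport-entropy inequalities with quadratic-type costs, in the spirit of the work of Cattiaux--Guillin--Wu and Gozlan--Léonard. The key observation is that a weighted Poincaré inequality of the form $\Var(f)\leq C(R)\int \sum_i \omega_i(u)(\partial_i f(u))^2\,d(\mu\star\gamma_1)$ with weights $\omega_i(u)=1/(1+u_i^2)$ self-improves (via the standard argument applying the inequality to $e^{sg}$ and integrating the resulting differential inequality in $s$, i.e. the Aida--Stroock / Bobkov--Ledoux exponential trick) to an inequality controlling the Laplace transform of Lipschitz-type functions, which is exactly the dual formulation of a transport-entropy inequality by Bobkov--Götze duality.

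Concretely, I would first recall the Bobkov--Götze characterization: $\mu\star\gamma_1$ satisfies $\cT_k(\nu,\mu\star\gamma_1)\leq \lambda H(\nu\mid\mu\star\gamma_1)$ for all $\nu$ if and only if, for every bounded measurable $\varphi$, $\int e^{Q_t\varphi}\,d(\mu\star\gamma_1)\cdot \ldots$ holds, where $Q_t$ is the infimum-convolution (Hopf--Lax) semigroup associated to the cost $k/\lambda$. The natural route here is the Gozlan--Roberto--Samson--Tetali type approach: one shows directly that a measure satisfying a weighted Poincaré inequality with weight $\omega$ satisfies a transport-entropy inequality for the cost $\theta(|x-y|)$ where $\theta$ is the Legendre-type transform built from $\omega$. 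With $\omega_i(u)=1/(1+u_i^2)$, the "dual" cost one obtains on each coordinate behaves quadratically for small increments (the $1/(1+u_i^2)\approx 1$ regime combined with the usual quadratic behaviour) and linearly for large increments, which accounts precisely for the two truncated pieces $\min(|x-y|^2;|x-y|)$ and $\min(\|x-y\|_4^4,\|x-y\|_4^2)$ in the statement --- the $\ell^4$ piece coming from the coordinatewise tensorized weight $\sum_i (\partial_i f)^2/(1+u_i^2)$ and the $\ell^2$ piece from the global variance term. The constant $c'(1+R^2)\exp(4R^2)$ is simply $C(R)$ up to a universal multiplicative factor coming from these normalizations.

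The key steps, in order: (i) state and invoke the general implication ``weighted Poincaré inequality with weight $\eta(u)=\mathrm{diag}(1/(1+u_i^2))$ $\Rightarrow$ transport-entropy inequality for the associated cost'', citing \cite{CGW10} or the relevant transport reference; (ii) identify the cost function produced by this weight, carrying out the (routine) Legendre computation that turns the coordinatewise weight $1/(1+u_i^2)$ into a coordinatewise cost that is $\asymp \min(t^2,t)$; (iii) sum the coordinatewise costs to recover the $\ell^4$ term $\sum_i \min(|x_i-y_i|^4,|x_i-y_i|^2)$ and absorb the Euclidean cross term into the $\min(|x-y|^2;|x-y|)$ piece, using $\|x-y\|_4^4=\sum_i (x_i-y_i)^4$ and $\|x-y\|_4^2\leq |x-y|^2$; (iv) track the constant to get $c'(1+R^2)e^{4R^2}$. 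I expect step~(i) to be the main obstacle: one must locate or prove the precise quantitative statement transferring a \emph{weighted} Poincaré inequality with a \emph{non-constant, coordinate-dependent} weight into a transport-entropy inequality, making sure the exponential-integrability of the weight (here $\int (1+u_i^2)\,d\gamma_{x,1}<\infty$, which holds since Gaussians have all moments) is what powers the self-improvement. Once that machinery is in place, steps~(ii)--(iv) are elementary one-variable convex-analysis and norm-comparison computations, and the scaling back from $\delta=1$ to general $\delta$ in the final Theorem~\ref{thm:transport} is the immediate dilation $u\mapsto u/\delta$ announced at the start of the subsection.
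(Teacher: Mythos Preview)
Your high-level plan --- deduce the transport-entropy inequality from the weighted Poincar\'e inequality~\eqref{eq:wPoinc} --- is correct, and indeed the paper cites exactly such an implication (\cite[Theorem~4.6]{Goz10}, not \cite{CGW10}, which is the Lyapunov paper). But the concrete mechanism you sketch (self-improvement via the exponential trick, Bobkov--G\"otze duality, Legendre transform of the weight) is not what the paper does, and your description of how the cost $k$ arises is off.

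The paper's argument is a change of variables. One introduces the explicit map $T(x)=(\omega(x_1),\ldots,\omega(x_d))$ with $\omega(u)=\mathrm{sign}(u)(|u|+u^2/2)$, chosen so that $(\omega'(u))^2\asymp 1+u^2$. Applying the weighted Poincar\'e inequality to $f\circ T$ turns the weight $1/(1+u_i^2)$ into a constant, so the pushforward $\tilde\mu=T_\#(\mu\star\gamma_1)$ satisfies an \emph{unweighted} Poincar\'e inequality with constant $2C(R)$. Then the Bobkov--Gentil--Ledoux equivalence gives $\cT_\rho(\nu,\tilde\mu)\leq H(\nu\mid\tilde\mu)$ for the quadratic-linear cost $\rho(x,y)=\alpha(|x-y|/D)$, $\alpha(u)=\min(u^2,|u|)$. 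Pulling back via $T^{-1}$ yields the inequality for $\mu\star\gamma_1$ with the cost $\tilde k(x,y)=\alpha(|T(x)-T(y)|/D)$. The final step is the elementary lower bound
\[
|T(x)-T(y)|^2 \;\geq\; c\bigl(|x-y|^2+\|x-y\|_4^4\bigr),
\]
obtained by expanding $\sum_i(\tfrac12|x_i-y_i|+\tfrac18|x_i-y_i|^2)^2$; together with the sub-multiplicativity and concavity properties of $\alpha$, this gives $\tilde k\geq c\,k/C(R)$.

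Two points in your sketch are inaccurate. First, the $\ell^2$ and $\ell^4$ pieces of $k$ do not come from ``the global variance term'' versus ``the coordinatewise tensorized weight'' as you write; both arise simultaneously from the single expansion above, the linear part of $\omega$ producing $|x-y|^2$ and the quadratic part producing $\|x-y\|_4^4$. Second, your step~(iii) aims for a \emph{coordinatewise} cost $\sum_i\min(|x_i-y_i|^4,|x_i-y_i|^2)$, but this is not what the theorem asserts, and the paper explicitly remarks (just after the proof) that obtaining the coordinatewise version from BGL would yield the strictly stronger dimension-free Euclidean $\cT_2$ inequality, which is precisely what is \emph{not} achieved here. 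So the route you describe either proves more than intended (and would need a sharper input than BGL) or collapses back to the paper's non-tensorized cost.
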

Before proving this result, let us show how to deduce
Theorem~\ref{thm:transport} as a corollary. The Euclidean and $\ell^4$
norms on $\dR^d$ satisfy:
\[
\forall z\in \dR^d, \quad \|z\|_4 \leq \ABS{z} \leq d^{1/4} \|z\|_4\,.
\] 
This gives the following lower bound on the cost $k$:
\begin{align*}
  k(x,y) &=    \min\PAR{|x-y|^2 ; |x-y|} + \min\PAR{\|x-y\|_4^4, \|x-y\|_4^2} \\
         &\geq \min\PAR{\|x-y\|_4^2 ; \|x-y\|_4}  + \min\PAR{\|x-y\|_4^4, \|x-y\|_4^2} \\
         &\geq \|x-y\|_4^2.
\end{align*}
By Theorem~\ref{thm:complicatedTransport} we get
\begin{align*}
  \cT_{2,4}(\nu,\mu\star\gamma_1) 
  \leq \cT_k(\nu,\mu\star\gamma_1) 
  \leq c'\PAR{1+R^2}\exp\PAR{4R^2} H(\nu | \mu\star\gamma_1); 
\end{align*}
where we recall that $\cT_{2,4}$ is the transportation cost associated
to $(x,y)\mapsto \|x-y\|^4$.  The inequality for a general $\delta$
follows by a simple scaling argument.  The inequality for the
Euclidean cost $\cT_2$ is proved in the same way, by bounding $k(x,y)$
from below by $d^{-1/2} \ABS{x-y}^2$.  This concludes the proof of
Theorem~\ref{thm:transport}.

\begin{proof}[Proof of Theorem~\ref{thm:complicatedTransport}]
We proceed in two steps. 

\paragraph{1. A transport-entropy inequality with an intricate cost.}
Let us define three functions $\alpha$, $\omega$ and $T$ by
\begin{align*}
  \forall u \in \dR, \quad 
  \omega(u) &= \mathrm{sign} (u) \left(\ABS{u}+\frac{u^2}{2}\right)\,; \\
  \forall u \in \dR, \quad 
  \alpha(u) &= \min(u^2 ; |u|)\,; \\
  \forall x\in \dR^d, \quad 
  T(x) &= (\omega(x_1),\ldots,\omega(x_d))\,. 
\end{align*}

According to \cite[Theorem 4.6]{Goz10}, the weighted Poincaré
inequality \eqref{eq:wPoinc} implies (and is actually equivalent to)
the following transport cost inequality: for all probability measure
$\nu$ on $\dR^d$,
\[
\cT_{\tilde{k}}(\nu,\mu\star \gamma_1) \leq H(\nu|\mu\star
\gamma_\delta),
\]
where the cost function $\tilde{k}$ is defined by
\begin{equation}\label{eq:cost}
  \tilde{k}(x,y) = \alpha \left(\frac{1}{D} \ABS{T(x) - T(y)} \right)\,,
  \qquad \forall x,y \in \dR^d
\end{equation}
and where $D = c'' \sqrt{C(R)}$ for some universal constant $c''$.

For the sake of completeness, let us give the short proof of the
implication we need.  Let us begin by showing that the measure
$\tilde{\mu} := T_\# (\mu \star \gamma_{1})$ satisfies the usual
Poincaré inequality with the constant $2C(R).$ Indeed, if $f$ is a
$\mathcal{C}^1$ function, applying the weighted Poincaré
inequality~\eqref{eq:wPoinc} to $g=f\circ T$ and using the elementary
bound $(\omega'(v))^2 \leq 2(1+ v^2)$ yields:
\begin{align*}
  \Var_{\tilde{\mu}}(f) 
  &\leq C(R) \int
    \sum_{i=1}^d \frac{1}{1+v_i^2} \omega'(v_i)^2 (\partial_i f)^2(T(v))\,%
    d(\mu\star \gamma_1)(v) \\
  &\leq 2C(R) \int \ABS{\nabla f}^2(u) d\tilde{\mu}(u). 
\end{align*}
According to a well known result by Bobkov, Gentil and Ledoux
\cite[Corollary 5.1]{BGL01} showing the equivalence between the
Poincaré inequality and a transport inequality involving a
quadratic-linear cost, the probability $\tilde{\mu}$ satisfies the
following: for any probability measure $\nu$ on $\dR^d$,
\[
\cT_{\rho}(\nu,\tilde{\mu}) \leq H(\nu | \tilde{\mu}),
\]
where the cost function $\rho : \dR^d\times \dR^d \to \dR^+$ is
defined by
\[
\rho(x,y) = \alpha\left( \frac{1}{D} \ABS{x-y}\right),\qquad x,y \in
\dR^d\,,
\]
where $D=c''\sqrt{C(R)}$ for some universal constant~$c''$.  Let $\nu$
be a probability measure on $\dR^d$ and let $(\tilde{X},\tilde{Y})$ be
an optimal coupling between $\tilde{\nu}:= T_\# \nu$ and $\tilde{\mu}$
(for the transport cost $\cT_\rho$) and denote by
$X = T^{-1}(\tilde{X})$ and $Y = T^{-1}(\tilde{Y})$. Then $(X,Y)$ is a
coupling between $\nu$ and $\mu\star \gamma_1$ and it holds
\[
  \esp{\tilde{k}(X,Y)} 
   =  \esp{\rho (T(X),T(Y))} 
   = \esp{\rho (\tilde{X} ,\tilde{Y})} 
   = \mathcal{T}_\rho(\tilde{\nu},\tilde{\mu}) \leq H(\tilde{\nu} | \tilde{\mu}) 
   = H(\nu | \mu\star \gamma_1),
\]
where the last equality comes from the fact that if
$\nu \ll \mu\star \gamma_1$, then $\tilde{\nu} \ll \tilde{\mu}$ with
\[
  \frac{d\tilde{\nu}}{d\tilde{\mu}}(u) 
  = \frac{d\nu}{d(\mu\star \gamma_1)} \PAR{T^{-1}(u)},
  \qquad \forall u\in \dR^d.
\]
This concludes the first step.

\paragraph{A lower bound on the cost function $\tilde{k}$.}

We now bound $\tilde{k}(x,y)$ from below by the more convenient cost
function $k(x,y)$.  According to \cite[Lemma 2.6]{Goz10},
$|\omega(u)-\omega(v)| \geq \omega (|u-v|/2)$, for all $u,v \in
\dR$. Therefore, for all $x$, $y$ in $\dR^d$:
\begin{align*}
  \ABS{T(x) - T(y)}^2 
  &= \sum_i \ABS{\omega(x_i) - \omega(y_i)}^2 \\
  &\geq \sum_i \omega\PAR{ \frac{\ABS{x_i - y_i}}{2}}^2 \\
  &= \sum_i  \PAR{ \frac{1}{2} \ABS{x_i - y_i} + \frac{1}{8} \ABS{x_i - y_i}^2}^2 \\
  &\geq \frac{1}{4} \sum_i \ABS{x_i - y_i}^2 + \frac{1}{64} \sum_i \ABS{x_i - y_i}^4 \\
  &\geq \frac{1}{32}\PAR{ \frac{1}{2} \ABS{x-y}^2 + \frac{1}{2} \|x-y\|_4^4}.
\end{align*}

Using the inequality $\alpha(au) \geq \alpha(a)\alpha(u)$ for all
$a,u \in \dR$ (\cite[Lemma 2.6]{Goz10}) and the concavity of the
function $u\mapsto \alpha(\sqrt{u})$, $u\in \dR^+$, this leads to the
following bound on the cost function $\tilde{k}$:
\begin{align*}
  \tilde{k}(x,y)  
  &\geq \alpha\PAR{ \frac{1}{D\sqrt{32}} 
    \PAR{ \frac{1}{2} \ABS{x-y}^2 + \frac{1}{2} \|x-y\|_4^4}^{1/2}} \\
  &\geq \frac{1}{2}\alpha\PAR{\frac{1}{D\sqrt{32}}} \PAR{\alpha( |x-y|) 
    + \alpha(\|x-y\|_4^2)}\,,
\end{align*}
Finally, it is easy to check that
$\alpha\PAR{\frac{1}{D\sqrt{32}}} \geq \frac{c'''}{C(R)} $ for some
universal constant $c'''$, which completes the proof.
\end{proof}

\begin{rem}
  If one could improve the conclusion in the result by
  Bobkov, Gentil, Ledoux and conclude that $\tilde{\mu}$ satisfies the
  transport inequality with the cost function
  \[
  (x,y) \mapsto \sum_{i=1}^d \alpha\left( \frac{1}{D}
    \ABS{x_i-y_i}\right)
  \]
  instead of $\rho$, then one would conclude that $\mu$ satisfies
  Talagrand's inequality, with respect to the Euclidean norm, with a
  dimension free constant.
\end{rem}

\section{Special cases and extensions}
\label{sec:partial}
\subsection{Spherically symmetric measures}

We prove in this section the following claim of
Theorem~\ref{thm:partial}:
\begin{thm}
  \label{thm:symmetry}
  If $\mu$ is a spherically symmetric measure with support in
  $B_d(0,R)$, then $\mu\star\gamma_\delta$ satisfies a logarithmic
  Sobolev inequality and
  \[
  C_{LS}(\mu \star \gamma_\delta) \leq 4\delta^2\exp\PAR{\frac{8}{\pi}
    \frac{R^2}{\delta^2}}.
  \]
\end{thm}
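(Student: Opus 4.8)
The plan is to reduce the problem to dimension one by exploiting the spherical symmetry, then apply the one-dimensional bound already obtained in the second item of Theorem~\ref{thm:logSob} (or equivalently in Section~\ref{sec:Miclo}). Since $\mu$ is spherically symmetric with support in $B_d(0,R)$, write $\mu$ as a mixture of uniform distributions on spheres: $\mu = \int_0^R \sigma_r \, d\rho(r)$, where $\sigma_r$ is the uniform probability measure on the centered sphere of radius $r$ and $\rho$ is the radial part of $\mu$. Then $\mu\star\gamma_\delta = \int_0^R (\sigma_r \star \gamma_\delta)\, d\rho(r)$. The key structural observation is that each $\sigma_r\star\gamma_\delta$ is itself spherically symmetric, so its potential $V_r = -\log p_r$ is a radial function; the hope is that the radial profile is uniformly convex away from a bounded region, so that Miclo's trick (Lemma~\ref{lem:Royer}) applies with good constants.

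First I would analyze a single $\sigma_r \star \gamma_\delta$. Its density at $z$ depends only on $|z|$, and is given by an integral of $\exp(-|z-x|^2/(2\delta^2))$ against $\sigma_r(dx)$, which produces (up to constants) a modified Bessel function of $|z|/\delta^2 \cdot r$. More usefully, following the computation at the start of Section~\ref{sec:perturbation}, one has $-\log p_r(z) = |z|^2/(2\delta^2) + \text{const} + W_\delta^{(r)}(z)$ where $W_\delta^{(r)}(z) = -\log\int \exp(z\cdot x/\delta^2)\,\nu_r(dx)$ and $\nu_r$ is supported on the sphere of radius $r\le R$. As computed there, $\nabla W_\delta^{(r)}$ equals $-\delta^{-2}$ times the barycenter of the tilted measure, which lies in $B_d(0,r)$, hence $W_\delta^{(r)}$ is $(r/\delta^2)$-Lipschitz. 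So Lemma~\ref{lem:Royer} with $\rho=\delta^{-2}$ and $l=r/\delta^2$ gives $C_{LS}(\sigma_r\star\gamma_\delta)\le 4\delta^2\exp(4a_1a_d r^2\delta^{-2})$ — but this still has the bad $a_d\sim\sqrt d$ factor, which is exactly what we want to avoid. The improvement must come from spherical symmetry: the point is that for a radial potential, the one-dimensional (radial) log-Sobolev inequality with the correct weight $r^{d-1}$ transfers to the full measure with no dimensional loss, because a spherically symmetric measure on $\dR^d$ satisfies a log-Sobolev inequality with the same constant as its radial projection pushed to $[0,\infty)$ with density $\propto r^{d-1}p_r$ — indeed splitting $|\nabla f|^2 = (\partial_r f)^2 + |\nabla_{\mathrm{sph}}f|^2/r^2$ and using that the sphere itself has a good log-Sobolev constant. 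So I would: (i) reduce to the radial measure $m$ on $[0,\infty)$ with density proportional to $r^{d-1}\int\exp(-|re_1-x|^2/(2\delta^2))\mu(dx)$; (ii) on the radial side, the potential is $V(r) = r^2/(2\delta^2) - (d-1)\log r + \tilde W(r)$, and the term $-(d-1)\log r$ is \emph{convex}, so it only helps; (iii) the remaining piece $\tilde W$ is a one-dimensional $(R/\delta^2)$-Lipschitz perturbation of a $\delta^{-2}$-convex function, so Miclo's trick in \emph{dimension one} (where $a_d = a_1 = \sqrt{2/\pi}$) gives $C_{LS}(m) \le 4\delta^2\exp(4a_1^2 R^2\delta^{-2}) = 4\delta^2\exp(\tfrac8\pi R^2\delta^{-2})$.

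The main obstacle is step (i): carefully justifying that the log-Sobolev constant of a spherically symmetric measure on $\dR^d$ is controlled by (a constant times) that of its one-dimensional radial marginal, uniformly in $d$. This requires the curvature-dimension / warped-product type estimate for the sphere — the uniform measure on $\mathbb{S}^{d-1}(r)$ satisfies a log-Sobolev inequality with constant of order $r^2/(d-1)$, which is \emph{small}, so the angular contribution is harmless — combined with a decomposition-of-variance/entropy argument as in Section~\ref{sec:Poincare} but for the radial-vs-angular splitting rather than the Gaussian-mixture splitting. Once this transfer principle is in hand, one must also check that the perturbation $\tilde W$ on the radial line genuinely stays $(R/\delta^2)$-Lipschitz after integrating out the angular variables and after the change to the density with the $r^{d-1}$ weight; this is a routine but slightly delicate computation showing the radial derivative of $\tilde W$ is again a barycenter-type quantity bounded by $R/\delta^2$. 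Assembling these gives precisely the claimed bound $C_{LS}(\mu\star\gamma_\delta)\le 4\delta^2\exp\!\big(\tfrac{8}{\pi}\tfrac{R^2}{\delta^2}\big)$, with no dimensional dependence.
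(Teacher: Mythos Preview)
Your transfer principle in step (i) has a real gap. In the entropy decomposition
\[
\Ent_{\mu\star\gamma_\delta}(f^2)=\int_0^\infty \Ent_{\sigma_r}(f^2)\,m(dr)+\Ent_m\!\PAR{r\mapsto\!\int f^2 d\sigma_r},
\]
the angular term is controlled via the log-Sobolev inequality on the sphere of radius $r$, whose constant is of order $r^2/(d-1)$. You assert this is ``small'', but $r$ ranges over all of $[0,\infty)$ since $\mu\star\gamma_\delta$ has full support; the resulting bound $\int \frac{r^2}{d-1}\int_{\sigma_r}|\nabla_{\mathrm{ang}}f|^2\,m(dr)$ is \emph{not} dominated by a dimension-free constant times $\int|\nabla f|^2\,d(\mu\star\gamma_\delta)$, because the weight $r^2/(d-1)$ is unbounded. (A sanity check: a spherically symmetric measure concentrated near radius $r$ has radial marginal with $C_{LS}=O(1)$ but full-measure $C_{LS}$ of order $r^2/d$, so no such transfer holds in general.) One could try to salvage this by exploiting the Gaussian tail of $m$ to compensate the growing weight, but that requires a genuinely new argument and will not give the clean constant $4\delta^2\exp(\tfrac{8}{\pi}R^2/\delta^2)$.

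The paper avoids the radial--angular splitting altogether. It observes that the density itself satisfies $p(z)=\hat p_\delta(|z|)$ where $\hat p_\delta$ is a one-dimensional Gaussian convolution of a symmetric measure supported in $[-R,R]$; it then performs Miclo's decomposition \emph{on $\hat p_\delta$} (not on the radial marginal with the $r^{d-1}$ weight), obtaining $-\log\hat p_\delta=w_c+w_b$ with $w_c$ even and $1/(2\delta^2)$-convex and $w_b$ bounded by $2(Ra_1/\delta)^2$. The key step is a simple convexity lemma: if $w$ is even and $\rho$-convex on $\dR$, then $z\mapsto w(|z|)$ is $\rho$-convex on $\dR^d$. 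Thus $-\log p(z)=w_c(|z|)+w_b(|z|)$ is a $\rho$-convex function plus a bounded perturbation on $\dR^d$, and Bakry--\'Emery plus Holley--Stroock apply directly in $\dR^d$ with the one-dimensional constants. No transfer of a one-dimensional LSI is needed; only the convexity is lifted.
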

Let us recall that $\mu\star\gamma_\delta$ is the law of the random
variable $S=X+\delta Z$. By assumption, the law $\mu$ of $X$ is
spherically symmetric, that is, invariant by any vectorial rotation of
$\dR^d$. Since $Z$ has the same invariance, this implies that the
density $p(z)$ of $S$ only depends on the norm of $z$, thus we can
write:
\[
p(z)=p(|z|e_1) =\int_{\dR^d}\! \frac{1}{(2\pi
  \delta^2)^{d/2}}\exp\PAR{-\frac{1}{2\delta^2}
  \PAR{\PAR{|z|-x_1}^2+\sum_{i=2}^dx_i^2}} d\mu(x_1,x_2,\ldots,x_d)\,.
\]
Denoting, for all $r\in\dR$,
\[
\hat{p}_\delta(r)=\int_{\dR}\! \frac{1}{(2\pi
  \delta^2)^{1/2}}\exp\PAR{-\frac{\PAR{|z|-x_1}^2}{2\delta^2}}
d\hat\mu_1(x_1)
\]
the density of the convolution of $\gamma_\delta$ with the first
marginal $\hat\mu_1$ of the measure
\[
\frac{1}{(2\pi
  \delta^2)^{(d-1)/2}}\exp\PAR{-\frac{1}{2\delta^2}\sum_{i=2}^dx_i^2}
d\mu(x_1,x_2,\ldots,x_d)\,,
\]
one has $p(z)=\hat{p}_\delta(|z|)$.

Since the one-dimensional measure $\hat{\mu}_1$ is supported in the
interval $[-R,R]$, the method from Section~\ref{sec:Miclo} apply.
Using Lemma~\ref{lem:Royer}, with $\sigma=2Ra_1$, we obtain a
decomposition
\[
-\log(\hat{p}_\delta(r))=w_\sigma(r)+w_b(r)\,,
\]
where $w_\sigma\,:\,\dR\rightarrow\dR$ is $1/(2\delta^2)$-convex and
$w_b\,:\,\dR\rightarrow\dR$ is bounded by $2(Ra_1/\delta)^2$.

Since the measure $\hat{\mu}_1$ is symmetric, the function
$\hat{p}_\delta$ is even, so that $w_\sigma$ and $w_b$ constructed in
the proof of Lemma \ref{lem:Royer} are even too.

This entails a decomposition of $p$ on $\dR^d$ as a sum
\[
-\log(p(z))=W_\sigma(z)+W_b(r)
\]
by taking $W_\sigma(z)=w_\sigma(|z|)$ and $W_b(z)=w_b(|z|)$.  The
function $W_b$ is of course bounded by $2(Ra_1/\delta)^2$.  We prove
in Lemma~\ref{lem:convexite} below that $W_c$ is convex.  The
conclusion follows by the same reasoning as in
Section~\ref{sec:Miclo}.

\begin{lem}
  \label{lem:convexite}
  Let $w:\,\dR\rightarrow\dR$ be a $\cC^2$, even, and $\rho$-convex
  function. Then $W:\,\dR^d\rightarrow\dR$ defined by $W(z)=w(|z|)$
  for all $z\in\dR^d$ is also $\cC^2$ and $\rho$-convex.
\end{lem}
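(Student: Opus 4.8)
The plan is to reduce the statement to a one–variable computation. Write $r=|z|$, so that $W(z)=w(r)$ with $r=\sqrt{z_1^2+\cdots+z_d^2}$. Away from the origin, $r$ is a smooth function of $z$ and the chain rule gives, for all $z\neq 0$,
\[
\nabla W(z) = w'(r)\,\frac{z}{r},\qquad
\Hess W(z) = \frac{w'(r)}{r}\Bigl(I_d - \frac{z z^{\top}}{r^2}\Bigr) + w''(r)\,\frac{z z^{\top}}{r^2}.
\]
In other words, in the orthonormal frame whose first vector is $z/r$, the Hessian is diagonal with entry $w''(r)$ in the radial direction and entry $w'(r)/r$ repeated $d-1$ times in the directions orthogonal to $z$. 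So for a unit vector $v$, writing $t=\DP{v,z/r}\in[-1,1]$,
\[
\Hess W(z)\,v\cdot v = t^2\,w''(r) + (1-t^2)\,\frac{w'(r)}{r}.
\]
Since $w$ is $\rho$-convex we have $w''(r)\geq\rho$; and $w'(r)/r\geq\rho$ follows from $w$ being even and $\rho$-convex — indeed $w'$ is odd with $w'(0)=0$, so $w'(r) = \int_0^r w''(s)\,ds \geq \rho r$ for $r>0$, hence $w'(r)/r\geq\rho$. Therefore $\Hess W(z)\,v\cdot v\geq\rho$ for every unit $v$ and every $z\neq 0$, which is the claim on $\dR^d\setminus\{0\}$.

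It remains to handle smoothness and the convexity bound at the origin, which is where the only genuine subtlety lies. First, $W$ is $\cC^2$ on all of $\dR^d$: because $w$ is even and $\cC^2$, its Taylor expansion at $0$ has no linear term, so $w(r)=w(0)+\tfrac12 w''(0)r^2+o(r^2)$, and one checks that $w(|z|)$ agrees near $0$ with a $\cC^2$ function of $z$ (concretely, $w(r)$ can be written as a $\cC^1$ function of $u=r^2$ near $0$, namely $w(\sqrt{u})$, whose derivative at $u=0$ is $w''(0)/2$; composing with the smooth map $z\mapsto|z|^2$ gives a $\cC^2$ function, and a short argument — e.g. matching second-order Taylor polynomials, or dominated convergence on difference quotients — shows the second derivatives extend continuously). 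At $z=0$ the radial and orthogonal directions coincide in the limit $r\to 0$, both eigenvalues tending to $w''(0)$, so $\Hess W(0)=w''(0)I_d\geq\rho I_d$. Combining with the continuity of $\Hess W$ just established and the bound already proved for $z\neq 0$, we conclude $\Hess W\geq\rho I_d$ everywhere, i.e. $W$ is $\rho$-convex.

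The main obstacle is purely the regularity at the origin: one must justify that $z\mapsto w(|z|)$ is $\cC^2$ and compute $\Hess W(0)$, since the naive chain-rule formulas blow up there. Everything else is the elementary eigenvalue bookkeeping above together with the elementary inequality $w'(r)\geq\rho r$ for even $\rho$-convex $w$. For the application in the proof of Theorem~\ref{thm:symmetry}, note that $w_\sigma$ produced by Lemma~\ref{lem:Royer} is $\cC^2$ (it is $w_c$ plus a Gaussian convolution of a Lipschitz function, hence smooth) and even, so Lemma~\ref{lem:convexite} applies with $\rho=1/(2\delta^2)$.
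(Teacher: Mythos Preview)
Your argument is correct and essentially identical to the paper's: the same Hessian computation for $z\neq 0$, the same eigenvalue bounds $w''(r)\geq\rho$ and $w'(r)/r\geq\rho$ (the paper phrases the latter as ``$0$ is a minimum of $w$''), and the same handling of the origin by continuous extension of $\nabla W$ and $\Hess W$ to $0$ and $w''(0)I_d$. One minor slip: writing $w(\sqrt{u})$ as merely $\cC^1$ in $u$ and composing with the smooth map $z\mapsto|z|^2$ yields only a $\cC^1$ function, not $\cC^2$ --- but your fallback (the second derivatives extend continuously to the origin) is exactly the ``classical continuation lemma'' the paper invokes, and is the right way to finish.
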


\begin{proof}
  Let us denote $N(z)=|z|$. For any $z\neq0$, one computes
  \begin{align*}
    \nabla N(z)&=\frac1{|z|}z\\
    \Hess N(z)&=\frac1{|z|}\PAR{I_d-\frac1{|z|^2}zz^T} 
  \end{align*}
  By composition with $w$, one deduces, for any $z\neq 0$,
  \begin{align*}
    \nabla W(z)&=\frac{w'(|z|)}{|z|}z\\
    \Hess W(z)&=\frac{w''(|z|)}{|z|^2}zz^T+\frac{w'(|z|)}{|z|}
                \PAR{I_d-\frac1{|z|^2}zz^T}\,. 
  \end{align*}
  These two quantities converge respectively to $0$ and $w''(0)I_d$
  when $z\to 0$. By a classical continuation lemma, this implies that
  $W$ is $\cC^2$ with $\nabla W(0) = 0$ and $\Hess W(0) = w''(0)I_d$.

  By assumption, $w''(|z|)\geq \rho$ for any $z\in\dR^d$. Furthermore,
  for any $z\neq0$, $\frac{w'(|z|)}{|z|}\geq \rho$ (since the
  assumptions imply that $0$ is a minimum of $w$).  Finally, noting
  that $zz^T$ and $\PAR{I_d-\frac1{|z|^2}zz^T}$ are the orthogonal
  projections on $\Vect(z)$ and $z^\perp$, one gets that
  $\Hess W(z)\geq \rho I_d$ for any $z\in\dR^d$.
\end{proof}

\subsection{Dimension free log-Sobolev for
  \texorpdfstring{$\delta \in (R/\sqrt{2},R)$}{R/sqrt(2)<delta<R}}
The first item of Theorem~\ref{thm:logSob} states that for $\delta>R$,
the probability measure $\mu\star \gamma_\delta$ satisfies a
logarithmic Sobolev inequality with an explicit, dimension free,
constant. In this section, we improve on this result by proving the
first point of Theorem~\ref{thm:partial}.

The proof of the following result relies on the connections between
functional inequalities and concentration of measure inequalities. The
well known Herbst argument shows that the logarithmic Sobolev
inequality implies a Gaussian concentration of measure
phenomenon. More precisely, if $\mu$ is a probability measure on
$\dR^d$ satisfying the logarithmic Sobolev inequality with a constant
$C_{LS}$, then for any $1$-Lipschitz function $f:\dR^d \to \dR$, it
holds
\[
\mu \left( f \geq m + t\right) \leq e^{-t^2/{C_{LS}}},\qquad \forall t
\geq0,
\]
where $m = \int f\,d\mu$ (see \textit{e.g.} Theorem 5.3 of
\cite{Ledoux-book}).  On the other hand, a recent result by E. Milman
\cite{Mil10} shows that conversely under some curvature assumptions a
sufficiently strong Gaussian concentration of measure inequality
implies back the logarithmic Sobolev inequality. It appears that in
the range of parameters $R /\sqrt{2}<\delta<R$ the measure
$\mu\star \gamma_\delta$ is sufficiently concentrated to apply
Milman's result.

\begin{thm}
  \label{thm:deltaMoyen}
  Suppose that $R /\sqrt{2}<\delta<R$, then $\mu\star \gamma_\delta$ satisfies
  a logarithmic Sobolev inequality with a constant depending only on $R$ and
  $\delta$ and not on $d$. 
\end{thm}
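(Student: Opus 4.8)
The plan is to follow the route sketched just before the statement: establish a dimension-free Gaussian concentration inequality for $\mu\star\gamma_\delta$ by hand, note that $\mu\star\gamma_\delta$ carries a curvature lower bound (already computed in Section~\ref{sec:perturbation}), and then invoke E.~Milman's converse theorem \cite{Mil10}. For the concentration step, I write $S=X+\delta Z$ as usual and, given a $1$-Lipschitz function $f:\dR^d\to\dR$, set $g(x)=\esp{f(x+\delta Z)}$. Then $g$ is again $1$-Lipschitz, so since $X\in B_d(0,R)$ the centered variable $g(X)-\esp{g(X)}$ takes its values in $[-2R,2R]$; conditionally on $X$, the map $z\mapsto f(x+\delta z)$ is $\delta$-Lipschitz, so by Gaussian concentration for $Z$ the variable $f(X+\delta Z)-g(X)$ is conditionally sub-Gaussian with variance proxy $\delta^2$. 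Decomposing $f(S)-\esp{f(S)}=\bigl(f(X+\delta Z)-g(X)\bigr)+\bigl(g(X)-\esp{g(X)}\bigr)$ and integrating the conditional Gaussian tail bound over $X$ gives $(\mu\star\gamma_\delta)\PAR{f\geq \esp{f(S)}+t}\leq \exp\bigl(-(t-2R)_+^2/(2\delta^2)\bigr)$; a routine passage from means to medians and from this shifted profile to an unshifted one then yields a genuine Gaussian concentration inequality $\alpha_{\mu\star\gamma_\delta}(r)\leq \exp(-r^2/D)$, with $D$ depending only on $R$ and $\delta$ (the Gaussian rate being of order $\delta^2$, the $2R$ term affecting only intermediate scales), uniformly in the dimension.

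Next, recall from Section~\ref{sec:perturbation} that $\mu\star\gamma_\delta$ has density $\exp(-V)$ with $\Hess V\geq \bigl(\tfrac1{\delta^2}-\tfrac{R^2}{\delta^4}\bigr)I_d=-\kappa I_d$, where $\kappa=\tfrac{R^2-\delta^2}{\delta^4}\geq0$ since $\delta<R$; thus $\mu\star\gamma_\delta$ satisfies the curvature condition $\mathrm{CD}(-\kappa,\infty)$. Milman's theorem \cite{Mil10} then says that under such a lower curvature bound a Gaussian concentration inequality which is strong enough relative to the curvature self-improves to a logarithmic Sobolev inequality, with a constant controlled only by the concentration rate and $\kappa$ --- in particular free of the dimension. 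The compatibility condition is, heuristically, that the Gaussian rate $\sim\delta^2$ of the concentration not exceed the curvature scale $1/\kappa=\delta^4/(R^2-\delta^2)$; and $\delta^2\leq 1/\kappa$ is precisely $R^2\leq 2\delta^2$, i.e.\ $\delta\geq R/\sqrt2$. In this regime the hypotheses of Milman's theorem are met, and one concludes that $C_{LS}(\mu\star\gamma_\delta)$ is bounded by a quantity depending only on $R$ and $\delta$.

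The main obstacle is this last step: identifying the exact quantitative version of Milman's result to be used, checking that the concentration constant produced above (with its offset $2R$ in the shifted tail bound, which must be absorbed) genuinely satisfies that hypothesis, and verifying that the resulting threshold is indeed $\delta>R/\sqrt2$ rather than a slightly different constant. By contrast the first two steps are comparatively routine: the concentration estimate is a conditioning argument combined with Gaussian concentration for $Z$, and the curvature bound has already been established in Section~\ref{sec:perturbation}.
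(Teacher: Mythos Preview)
Your proposal is correct and follows essentially the same route as the paper: condition on $X$, use Gaussian concentration for $\delta Z$ to obtain the shifted tail bound $\exp\bigl(-(t-2R)_+^2/(2\delta^2)\bigr)$, invoke the Hessian lower bound $-\kappa=\tfrac{1}{\delta^2}-\tfrac{R^2}{\delta^4}$ from Section~\ref{sec:perturbation}, and feed both into Milman's theorem \cite{Mil10}, the compatibility threshold coming out exactly at $\delta>R/\sqrt{2}$. The only cosmetic difference is that the paper does not absorb the $2R$ shift into a global profile $\alpha(r)\leq e^{-r^2/D}$ but instead keeps the bound $\exp\bigl(-\tfrac{\eps}{2\delta^2}t^2\bigr)$ for $t>t_\eps=\tfrac{2R}{1-\sqrt{\eps}}$ and applies \cite[Theorem~1.2]{Mil10} directly, the condition $\tfrac{\eps}{2\delta^2}\geq\tfrac12\kappa$ (for some $\eps<1$) being precisely $R/\delta<\sqrt{2}$.
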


\begin{proof}
  Let us examine the concentration properties of $X+\delta Z$ where
  $X$ and $Z$ are independent random variables with respective laws
  $\mu$ and $\cN_d(0,I_d)$.  If $f : \dR^d \to \dR$ is a $1$-Lipschitz
  function, then denoting by $m = \esp{f(X+\delta Z)}$, it holds for
  any $t \geq0$
  \begin{align*}
    \prb{ f(X + \delta Z) \geq m + t}  
    & = \esp[X]{  \prb{ f(X + \delta Z) \geq m + t \,|\, X}} \\
    & \leq \esp[X]{ \exp\left(-\frac{1}{2\delta^2} 
      \left[t+m-\esp[Z]{f(X+\delta Z)}\right]_+^2\right)}
  \end{align*}
  where the second inequality follows from the concentration
  inequality satisfied by $\delta Z$ (which is for instance a
  consequence of the fact that $\gamma_\delta$ satisfies the
  logarithmic Sobolev inequality with the constant $2\delta^2$).  Now,
  for any $x\in B_d(0,R)$,
  \[
  \ABS{m - \esp[Z]{f(x+\delta Z)}} = \left| \esp[X]{ \esp[Z]{ f(X+\delta
        Z) - f(x+ \delta Z) }} \right| \leq \esp[X]{ \ABS{X-x}} \leq
  2R.\]
  Therefore $\esp[Z]{f(X+\delta Z)} \leq 2R + m$ almost surely, hence
  \[
  \prb{ f(X + \delta Z) \geq m + t} \leq
  \exp\left(-\frac{1}{2\delta^2} \left[t-2R\right]_+^2\right).
  \]
  In particular, for any $0<\eps<1$, it holds
  \[
  \prb{f(X + \delta Z) \geq m + t} \leq
  \exp\left(-\frac{\eps}{2\delta^2}t^2 \right), \qquad \forall t >
  \frac{2R}{1-\sqrt{\eps}} := t_\eps.
  \]

  On the other hand, the density of the law of $X+\delta Z$ is of the
  form $e^{-V_\delta}$, with a function $V_\delta$ such that
  $\mathrm{Hess}\, V_{\delta} \geq \frac{1}{\delta^2} -
  \frac{R^2}{\delta^4} = - \kappa_\delta$.
  In this range of parameters, $\kappa_\delta >0$. According to
  Theorem 1.2 of \cite{Mil10}, as soon as
  $\frac{\eps}{2\delta^2} \geq \frac{1}{2} \kappa_\delta$ (which means
  that $R/\delta <\sqrt{1+\eps}$), the probability measure $\mu$
  satisfies a Gaussian isoperimetric inequality, which in turn implies
  the logarithmic Sobolev inequality with a constant depending only on
  the parameters $\eps, R, \delta$.
\end{proof}

\subsection{Dimension free log-Sobolev for log-convex functions}
\label{sec:Maurey}
Recall the following results by Maurey.
\begin{thm}[{\cite[Theorem 3]{Ma91}}]
  Let $X$ be a bounded random variable such that $\ABS{X} \leq R$
  a.s. Then $X$ satisfies the so called convex $\tau$-property :
  \[
  \esp{e^{Q_{4R^2}f(X)}} \esp{e^{-f(X)}} \leq 1,
  \]
  for any convex function $f : \dR^d \to \dR$, where
  $Q_sf(x) = \inf_{y\in \dR^d}\BRA{ f(y) + \frac{\ABS{x-y}^2}{4s}   }$,
  $s>0.$
\end{thm}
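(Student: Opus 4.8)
The plan is to run what is essentially Maurey's argument: duplicate $X$, exploit the convexity of $f$ together with the infimum-convolution structure of $Q_{4R^2}f$ by choosing, in the infimum defining $Q_{4R^2}f$, test points on the segment joining the two copies, and then close the estimate with a symmetrization and one elementary scalar inequality in which the coefficient $4R^2$ turns out to be exactly what is needed.

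Concretely, I would fix $X$, $X'$ independent with the law of $X$ and use that $\esp{e^{Q_{4R^2}f(X)}}\,\esp{e^{-f(X)}}=\esp{e^{Q_{4R^2}f(X)-f(X')}}$ (all the expectations are finite: $f$ is a finite convex function on $\dR^d$, hence continuous with an affine minorant, so $Q_{4R^2}f$ is finite everywhere, and $X,X'$ are bounded). This reduces everything to an upper bound for $Q_{4R^2}f(x)-f(x')$ when $x,x'\in B_d(0,R)$. For that, for each $\lambda\in[0,1]$ I would use the competitor $(1-\lambda)x+\lambda x'$ in the infimum defining $Q_{4R^2}f(x)$: it lies at distance $\lambda\ABS{x-x'}$ from $x$, and convexity gives $f\PAR{(1-\lambda)x+\lambda x'}\le(1-\lambda)f(x)+\lambda f(x')$. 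Writing $u=f(x)-f(x')$ and $v=\ABS{x-x'}^2/(16R^2)$, this yields
\[
Q_{4R^2}f(x)-f(x')\ \le\ \inf_{\lambda\in[0,1]}\PAR{(1-\lambda)u+\lambda^2 v}\ =:\ \phi_v(u),
\]
and an elementary one-variable minimization gives $\phi_v(u)=u$ for $u\le0$, $\phi_v(u)=u-u^2/(4v)$ for $0\le u\le2v$, and $\phi_v(u)=v$ for $u\ge2v$. This is the place where the support hypothesis enters: $\ABS{x-x'}\le2R$ forces $v\le1/4$.

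I would then symmetrize. With $U=f(X)-f(X')$ and $V=\ABS{X-X'}^2/(16R^2)$, the previous step gives $\esp{e^{Q_{4R^2}f(X)-f(X')}}\le\esp{e^{\phi_V(U)}}$; and since $(X,X')$ is exchangeable, $(U,V)$ has the same law as $(-U,V)$, so $\esp{e^{\phi_V(U)}}=\tfrac12\esp{e^{\phi_V(U)}+e^{\phi_V(-U)}}$. Everything then reduces to the deterministic inequality $e^{\phi_v(u)}+e^{\phi_v(-u)}\le2$ for all $u\in\dR$ and $v\in[0,1/4]$; by the symmetry $u\mapsto-u$ I may assume $u\ge0$, where $\phi_v(-u)=-u$, so the claim becomes $e^{\phi_v(u)}+e^{-u}\le2$. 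For $u\ge2v$ this reads $e^v+e^{-u}\le e^v+e^{-2v}$, and $v\mapsto e^v+e^{-2v}-2$ is convex with nonpositive values at $v=0$ and $v=1/4$, hence $\le0$ on $[0,1/4]$. For $0\le u\le2v$ one sets $\psi(u)=e^{u-u^2/(4v)}+e^{-u}$ and checks $\psi(0)=2$, $\psi'(0)=0$, and
\[
\psi''(u)=e^{-u}-\PAR{\tfrac1{2v}-\PAR{1-\tfrac{u}{2v}}^2}e^{u-u^2/(4v)}\ \le\ 0\quad\text{on }[0,2v],
\]
using that there $\PAR{1-\tfrac{u}{2v}}^2\le1$ and $v\le1/4$ give $\tfrac1{2v}-\PAR{1-\tfrac{u}{2v}}^2\ge1$, while $e^{u-u^2/(4v)}\ge1\ge e^{-u}$; so $\psi$ is concave with $\psi'(0)=0$, hence nonincreasing, hence $\psi\le2$. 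This completes the argument.

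The hard (or at least delicate) point is precisely this last scalar inequality, and the reason it holds is exactly the bound $v\le1/4$, i.e.\ the coefficient $4R^2$: this is what makes $\tfrac1{2v}\ge2$ and therefore forces $\psi''\le0$. A larger cost coefficient would break the inequality, so in this respect the statement is sharp. The remaining ingredients — the explicit form of $\phi_v$, measurability of $(U,V)$, finiteness of the expectations — are routine, and I would note that the convexity of $f$ is used at exactly one place, the competitor step, and cannot be dispensed with.
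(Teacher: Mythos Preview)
Your argument is correct and is, in fact, essentially Maurey's original proof of this result. The paper itself does not prove this theorem: it is simply recalled from \cite{Ma91} and then \emph{used} (together with the Gaussian $\tau$-property and tensorization) to derive the restricted logarithmic Sobolev inequality for log-convex functions. So there is no ``paper's own proof'' to compare against.

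On the substance: your duplication/symmetrization scheme, the competitor $(1-\lambda)x+\lambda x'$ exploiting convexity, the explicit minimizer giving $\phi_v(u)$, and the reduction to the scalar inequality $e^{\phi_v(u)}+e^{\phi_v(-u)}\le 2$ for $v\le 1/4$ are exactly the steps in Maurey's paper. Your verification of the scalar inequality is clean: the convexity argument for $v\mapsto e^v+e^{-2v}$ on $[0,1/4]$ is correct (a convex function lies below its chord, and the endpoint values are $0$ and $e^{1/4}+e^{-1/2}-2<0$), and the concavity of $\psi$ on $[0,2v]$ via $\tfrac{1}{2v}\ge 2$ is the right mechanism. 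Your remark that the constant $4R^2$ is exactly what makes $v\le 1/4$ and hence $\psi''\le 0$ is also the standard observation about sharpness. One tiny quibble: when $v=0$ (i.e.\ $X=X'$) you should note separately that then $u=0$ as well, so the bound is trivial; your piecewise formula for $\phi_v$ presupposes $v>0$.
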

On the other hand, the Gaussian random variable $\delta Z$ with law
$\cN_d(0,\delta I_d)$ satisfies the following $\tau$-property
\[
\esp{e^{Q_{\delta^2} f (\delta Z)}} \esp{e^{-f(\delta Z)}} \leq 1,
\]
for any function $f: \dR^d \to \dR$ (\cite[Theorem 2]{Ma91}).

By the tensorization property of the convex $\tau$-property
(\cite{Ma91}), one concludes that $(X,\delta Z)$ satisfies the
following $\tau$-property
\[
\esp{e^{\tilde{Q}f (X,\delta Z)}} \esp{e^{-f(X,\delta Z)}} \leq 1,
\]
for any convex function $f : \dR^d\times \dR^d \to \dR$, where
\[
\tilde{Q}f(x_1,x_2) = \inf_{(y_1,y_2) \in \dR^d \times \dR^d} 
\left\{f(y_1,y_2) + \frac{1}{16R^2} \ABS{x_1-y_1}^2 + 
\frac{1}{4\delta^2} \ABS{x_2-y_2}^2\right\}.
\]
In particular, applying the inequality above to
$f(x_1,x_2) = g(x_1+x_2)$, and using the fact that
\[
\inf_{y_1+y_2 =y} \left\{ \frac{1}{16R^2} \ABS{x_1-y_1}^2 +
  \frac{1}{4\delta^2} \ABS{x_2-y_2}^2 \right\} =
\frac{1}{4C(\delta,R)}\ABS{x_1+x_2 - y}^2,
\]
with $C(\delta,R) = \delta^2 + 4R^2$, one concludes that $X+ \delta Z$
satisfies
\[
\esp{e^{Q_C g (X+\delta Z)}}\esp{e^{-g(X+\delta Z)}} \leq 1,
\]
for any convex function $g: \dR^d \to \dR.$

According to \cite{GRST14}, this inequality is equivalent to the
following transport type inequality
\[
\overline{\mathcal{T}}_2(\nu_1,\nu_2) \leq C(\delta,R)
\left(H(\nu_1|\mu\star\gamma_\delta) +
  H(\nu_2|\mu\star\gamma_\delta)\right),
\]
for all probability measures $\nu_1,\nu_2$ on $\dR^d$, where
$H(\,\cdot\,|\mu\star\gamma_\delta)$ denotes the relative entropy
functional and
\[
\overline{\mathcal{T}}_2(\nu_1,\nu_2) = \inf_{X_1 \sim \nu_1,\ X_2
  \sim \nu_2} \esp{\ |X_1 -\esp{X_2 |X_1}|^2 \ }.
\]
It is also shown in \cite{GRST14} that this transport inequality
implies the following logarithmic Sobolev inequality
\[
\mathrm{Ent}_{\mu\star\gamma_\delta} (e^f) \leq 8(\delta^2 + 4 R^2)
\int |\nabla f |^2 e^f \,d\mu\star\gamma_\delta,
\]
for any \emph{convex} function $f: \dR^d \to\dR$. This proves the fourth
item of Theorem~\ref{thm:partial}.

\printbibliography

\bigskip


{\footnotesize %
  \noindent Jean-Baptiste~\textsc{Bardet}, 
  e-mail: \texttt{jean-baptiste.bardet(AT)univ-rouen.fr}

  \medskip

  \noindent\textsc{
  LMRS, Université de Rouen,
Avenue de l'Université, BP 12,
Technopôle du Madrillet,
76801 Saint-Étienne-du-Rouvray,
France.}

\bigskip

  \noindent Nathaël~\textsc{Gozlan},
e-mail: \texttt{natael.gozlan(AT)u-pem.fr}

 \medskip

 \noindent\textsc{LAMA UMR 8050, CNRS-Université-Paris-Est-Marne-La-Vallée, 
 5, boulevard Descartes,
 Cité Descartes, Champs-sur-Marne,
 77454 Marne-la-Vallée Cedex 2, France.}

\bigskip

 \noindent Florent \textsc{Malrieu},
 e-mail: \texttt{florent.malrieu(AT)univ-tours.fr}

 \medskip

 \noindent\textsc{
Laboratoire de Mathématiques et Physique Théorique
(UMR CNRS 7350), Fédération Denis Poisson (FR CNRS 2964), Université
François-Rabelais, Parc de Grandmont, 37200 Tours, France.
}

\bigskip

   \noindent Pierre-Andr\'e~\textsc{Zitt},
e-mail: \texttt{pierre-andre.zitt(AT)u-pem.fr}

 \noindent\textsc{LAMA UMR 8050, CNRS-Université-Paris-Est-Marne-La-Vallée, 
 5, boulevard Descartes,
 Cité Descartes, Champs-sur-Marne,
 77454 Marne-la-Vallée Cedex 2, France.}

}

\end{document}
